\newsavebox{\commentbox}
\definecolor{Green}{RGB}{30, 150, 30}
\newtheorem{thm}{Theorem}[section]
\newtheorem{prop}[thm]{Proposition}
\newtheorem{claim}[thm]{Claim}
\newtheorem{lem}[thm]{Lemma}
\newtheorem{cor}[thm]{Corollary}
\theoremstyle{definition}
\theoremstyle{definition}
\theoremstyle{definition}
\theoremstyle{definition}
\newtheorem{remark}[thm]{Remark}
\newtheorem{question}[thm]{Question}
\theoremstyle{definition}
\theoremstyle{definition}
\theoremstyle{definition}
\theoremstyle{definition}
\newcommand*\Aut{\operatorname{Aut}}
\newcommand{\Out}{\mathrm{Out}}
\newcommand*\Stab{\operatorname{Stab}}
\newcommand*\diam{\operatorname{diam}}
\newcommand{\kernel}{\mathrm{Ker}}
\newcommand{\tsh}[1]{\left\{\kern-.7ex\left\{#1\right\}\kern-.7ex\right\}}
\newcommand{\stab}{\operatorname{Stab}}
\newcommand{\Inn}{\mathrm{Inn}}
\begin{document}
\title[Separable convex-cocompact subgroups]{Some examples of separable convex-cocompact subgroups}

\author{Mark Hagen}
\address{School of Mathematics, University of Bristol, Bristol, UK}
\email{markfhagen@posteo.net}

\author{Alessandro Sisto}
	\address{Maxwell Institute and Department of Mathematics, Heriot-Watt University, Edinburgh, UK}
	\email{a.sisto@hw.ac.uk}
	
\date{\today}

\begin{abstract}
    Reid asked whether all convex-cocompact subgroups of mapping class groups are separable. Using a construction of Manning-Mj-Sageev, we give examples of separable convex-cocompact subgroups that are free of arbitrary finite rank, while prior examples seem to all be virtually cyclic.  
\end{abstract}

\maketitle

\section{Introduction}

A subgroup $H$ of a group $G$ is \emph{separable (in $G$)} if, for all $g\notin H$, there exists a finite group $F$ and 
a homomorphism $\phi:G\to F$ such that $\phi(g)\notin \phi(H)$. Separability arises naturally in topology when, given an 
immersed submanifold $N$ of a manifold $M$, one tries to find a finite cover of $M$ where $N$ embeds; see \cite{Scott}. 
Notably, separability, and more specifically separability of quasiconvex subgroups of certain hyperbolic groups, played a 
crucial role in the resolution of the Virtual Haken and Virtual Fibering Conjectures \cite{Wise,Agol}.

In the context of mapping class groups, the closest analogues of quasiconvex subgroups of hyperbolic groups are 
\emph{convex-cocompact} subgroups, as defined in \cite{FM:conv_cocpt}; see Subsection \ref{subsec:MCG} for the 
characterisation that we will use. In analogy with the case of hyperbolic manifolds, Reid asked whether convex-cocompact 
subgroups of mapping class groups are separable \cite[Question 3.5]{Reid:problems}. As far as we are aware, the only examples 
of convex-cocompact subgroups known to be separable are virtually cyclic, as covered by \cite{LM:separable}. However, it is 
shown in \cite{CHHS} that if "enough" hyperbolic groups are residually finite then all convex-cocompact subgroups are 
separable. (Note that it is known that not all finitely generated subgroups of mapping class groups are separable 
\cite{LM:separable}.)

The main goal of this paper is to provide the first examples of separable convex-cocompact subgroups that are not virtually cyclic, using the criterion in Theorem~\ref{thm:criterion}.

Write $\Sigma_{\mathfrak g}$ to mean the closed connected orientable surface of genus $\mathfrak g$ and 
$MCG(\Sigma_{\mathfrak g})$ its mapping class group.  Given $H\leq MCG(\Sigma_{\mathfrak g})$, denote by $\Gamma_H$ the 
corresponding extension of $H$. This can be seen as the preimage of $H$ under the natural map $MCG(\Sigma_{\mathfrak 
g}-\{p\})\to MCG(\Sigma_{\mathfrak g})$ occurring in the Birman exact sequence (see e.g. \cite[Section 1.2]{FM:conv_cocpt} 
for detailed description of $\Gamma_H$). When $H$ is convex-cocompact, $\Gamma_H$ is hyperbolic \cite{Hamenstadt} (if in 
addition $H$ is free, as it will be in our examples, hyperbolicity of $\Gamma_H$ is also \cite[Theorem 1.3]{FM:conv_cocpt}).

The criterion is:

\begin{thm}\label{thm:criterion}
Let $\mathfrak g\geq 1$ and let $H$ be a torsion-free malnormal convex-cocompact subgroup of $MCG(\Sigma_{\mathfrak g})$. If $\Gamma_H$ is conjugacy separable, then $H$ is separable in $MCG(\Sigma_{\mathfrak g})$.
\end{thm}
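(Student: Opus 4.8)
The plan is to reduce separability of $H$ to a conjugacy question inside $\Gamma_H$, one element $g\notin H$ at a time. Write $G=MCG(\Sigma_{\mathfrak g})$, let $\widehat G=MCG(\Sigma_{\mathfrak g}-\{p\})$ with Birman projection $\pi\colon\widehat G\to G$, whose kernel is the point-pushing subgroup $S=\pi_1(\Sigma_{\mathfrak g})$, so that $\Gamma_H=\pi^{-1}(H)$ fits in $1\to S\to\Gamma_H\to H\to1$ and, by Dehn--Nielsen--Baer, $G\hookrightarrow\Out(S)$ with $H$ acting on $S$ by the corresponding outer automorphisms. The basic dictionary I would set up first: since $S\trianglelefteq\Gamma_H$ and $S$ acts on itself by inner automorphisms, for $s\in S$ the $\Gamma_H$-conjugacy class of $s$ meets $S$ in exactly $\bigcup_{h\in H}h\cdot[s]_S$, the $H$-orbit of the free homotopy class $[s]_S$ under the outer action. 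Thus for $s,s'\in S$ one has $s\sim_{\Gamma_H}s'$ if and only if $[s']_S\in H\cdot[s]_S$.

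With this dictionary, fix $g\in G\setminus H$ and a lift $\varphi_g\in\Aut(S)$ (so $\varphi_g(s)=\widehat g\,s\,\widehat g^{-1}$ for any lift $\widehat g$ of $g$). The heart of the argument is to produce a single $s\in S$ whose class is genuinely moved off its $H$-orbit:
\[
g\cdot[s]_S\notin H\cdot[s]_S,\qquad\text{equivalently}\qquad \varphi_g(s)\not\sim_{\Gamma_H}s.
\]
Granting such an $s$, conjugacy separability of $\Gamma_H$ yields a finite quotient $q\colon\Gamma_H\to Q$ with $q(\varphi_g(s))\not\sim_Q q(s)$; I will then globalize this to a finite quotient of $G$ separating $g$ from $H$. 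Since $S$ is finitely generated, the intersection $K$ of all subgroups of $S$ of index at most $[S:\ker q\cap S]$ is characteristic of finite index, with $K\le\ker(q)\cap S$; as $K$ is characteristic, every element of $G\hookrightarrow\Out(S)$ descends to $\Out(S/K)$, giving a homomorphism $\rho\colon G\to\Out(S/K)$ to a finite group. I claim $\rho(g)\notin\rho(H)$. Indeed, if $\rho(g)=\rho(h)$ for some $h\in H$, then $\varphi_g\varphi_h^{-1}$ is inner on $S/K$, so $\varphi_g(s)$ and $\varphi_h(s)$ are conjugate in $S/K$, whence $q(\varphi_g(s))\sim_Q q(\varphi_h(s))$ because $q|_S$ factors through $S/K$. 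But $\varphi_h(s)=\widehat h\,s\,\widehat h^{-1}$ with the lift $\widehat h$ lying in $\Gamma_H$, so $\varphi_h(s)\sim_{\Gamma_H}s$ and therefore $q(\varphi_h(s))\sim_Q q(s)$; combining gives $q(\varphi_g(s))\sim_Q q(s)$, a contradiction. It is precisely this last step---that $\varphi_h(s)$ is conjugate to $s$ inside $\Gamma_H$ via a lift of $h$---that forces us to assume conjugacy separability of the whole extension $\Gamma_H$ rather than of $S$ alone.

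The main obstacle is the existence of the curve $s$ above, and here is where malnormality and convex-cocompactness enter. The plan is to work on the Gromov boundary $\partial\mc C$ of the curve complex $\mc C=\mc C(\Sigma_{\mathfrak g})$, on which $G$ acts by homeomorphisms and $H$ acts with a limit set $\Lambda H$. Convex-cocompactness makes this a convergence-type action, and I would invoke the standard fact that malnormality of the convex-cocompact subgroup $H$ forces distinct cosets to have disjoint limit sets, so that $g\Lambda H\cap\Lambda H=\emptyset$ for $g\notin H$. Now choose simple closed curves $c_n\to\lambda\in\Lambda H$ (for instance $c_n=h'_n\cdot c_0$ along an $H$-orbit converging to $\lambda$). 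If no $c_n$ were good, i.e.\ $g\cdot c_n=h_n\cdot c_n$ with $h_n\in H$ for all $n$, then $g\cdot c_n\to g\lambda\in g\Lambda H\subseteq\partial\mc C$, whereas $h_n\cdot c_n=(h_nh'_n)\cdot c_0$ either has a constant subsequence (when $\{h_nh'_n\}$ is finite), and so stays inside $\mc C$, or escapes to infinity in $H$ and hence accumulates on $\Lambda H$; either way this contradicts $g\lambda\in g\Lambda H$, which lies in neither $\mc C$ nor $\Lambda H$. Hence some $c_n$ satisfies $g\cdot[c_n]_S\notin H\cdot[c_n]_S$, supplying the missing element $s$. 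I expect this dynamical input, together with the precise correspondence between malnormality and disjointness of limit sets for convex-cocompact subgroups, to be the technically delicate part; the remaining reductions are formal.
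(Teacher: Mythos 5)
Your proposal is correct in outline, and its formal skeleton coincides with the paper's, but the key step is done by a genuinely different argument. The reduction you set up --- find $s\in S$ with $\varphi_g(s)\not\sim_{\Gamma_H}s$, apply conjugacy separability of $\Gamma_H$, then globalize through the characteristic finite-index subgroup $K\leq S$ and the induced homomorphism to $\Out(S/K)$ --- is precisely the paper's Grossman-type criterion (Proposition \ref{prop:super_grossman}); indeed your conjugacy condition is equivalent to the paper's pointwise condition that $\phi(x)\neq h(x)$ for all $h\in\Gamma_H$, since $\gamma s\gamma^{-1}=\varphi_\gamma(s)$ for $\gamma\in\Gamma_H$. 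The divergence is in producing the witness. The paper does this in two steps: the coset $\bar\phi^{-1}H$ contains only finitely many non-pseudo-Anosov mapping classes (Proposition \ref{prop:non-pA}, where malnormality, torsion-freeness and the quasi-axis/acylindricity machinery are consumed), and then a Minasyan--Osin-style products-of-large-powers argument (Proposition \ref{prop:not-conjugate}) produces one element moved off its conjugacy class by each of the finitely many resulting non-inner automorphisms. You instead use boundary dynamics: curves $c_n\to\lambda\in\Lambda H$ along an $H$-orbit, together with the disjointness $g\Lambda H\cap\Lambda H=\emptyset$, force some $c_n$ with $g\cdot c_n\notin H\cdot c_n$. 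If completed, your route is shorter, bypasses the paper's Sections 3--4 entirely, and --- notably --- never uses torsion-freeness of $H$ or infinite order of $g$; the paper needs the torsion-free finite-index subgroup $M_0$ and Niblo's result exactly because Proposition \ref{prop:non-pA} requires $g$ of infinite order. Your argument would therefore also address part of the paper's closing question about relaxing torsion-freeness.

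The caveat is that the ``standard fact'' you invoke --- malnormality of the convex-cocompact subgroup $H$ gives $g\Lambda H\cap\Lambda H=\emptyset$ for all $g\notin H$ --- is where the genuine content lives, and it is not off-the-shelf in this setting: $\mc C(\Sigma_{\mathfrak g})$ is not proper and the $MCG(\Sigma_{\mathfrak g})$-action is not properly discontinuous, so the limit-set intersection theorem for quasiconvex subgroups of hyperbolic groups (Gitik--Mitra--Rips--Sageev) does not apply directly. The fact is true and can be proved from the paper's Lemma \ref{lem:??} (which itself rests on the WPD arguments of \cite{AMST}): a common point $\lambda\in\Lambda H\cap g\Lambda H$ makes the quasiconvex orbits $H\cdot o$ and $gH\cdot o$ fellow-travel along rays to $\lambda$, so $gH\cdot o\cap N_C(H\cdot o)$ has infinite diameter for suitable $C$, whence $H\cap gHg^{-1}$ is infinite, contradicting malnormality. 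So the gap is fillable, but you should be aware that you are leaning on the same coarse-intersection lemma the paper uses, not on a purely formal convergence-group fact. Two minor points: your argument needs $\Lambda H\neq\emptyset$, so the case $H=\{1\}$ (the only finite torsion-free case) must be handled separately, e.g.\ by residual finiteness of the mapping class group; and a vertex of $\mc C(\Sigma_{\mathfrak g})$ determines $[s]_S$ only up to inversion, which is harmless in the direction you use it.
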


To apply the criterion, we need examples of convex-cocompact subgroups $H$ with $\Gamma_H$ conjugacy separable.  These come from work of Manning-Mj-Sageev \cite{MMS} -- who gave examples where $\Gamma_H$ is virtually compact special -- combined with Minasyan--Zalesskii's result on conjugacy separability of hyperbolic virtually special groups \cite{MZ}. So, the examples provided by the following theorem come from \cite{MMS}; we just observe that their construction gives rise to malnormal subgroups in many cases.

\begin{thm}\label{thm:intro_examples}
For every $\mathfrak g\geq 3$ and $n\geq 1$, there exist infinitely many conjugacy classes of malnormal convex-cocompact subgroups $H<MCG(\Sigma_{\mathfrak g})$ such that $H$ is isomorphic to a free group of rank $n$ whose corresponding extension $\Gamma_H$ is virtually compact special.
\end{thm}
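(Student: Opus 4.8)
The plan is to apply Theorem~\ref{thm:criterion} to concrete examples of convex-cocompact free subgroups coming from Manning–Mj–Sageev \cite{MMS}, whose extensions are virtually compact special. So I would proceed in three stages: (i) extract from \cite{MMS} a family of convex-cocompact free subgroups $H$ of the right rank and genus with $\Gamma_H$ virtually compact special; (ii) arrange (or verify) that these $H$ can be taken malnormal; and (iii) produce infinitely many conjugacy classes. Since the only genuinely new input needed here (everything else being imported from \cite{MMS} and the criterion) is the malnormality, that is where the real work lies.

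First I would recall the \cite{MMS} construction. Manning–Mj–Sageev build convex-cocompact subgroups of mapping class groups by starting with a free group acting on a product of trees / a suitable cube complex and controlling the geometry so that the corresponding surface-group extension $\Gamma_H$ acts properly and cocompactly on a CAT(0) cube complex, in fact yielding virtual compact specialness. For each $\mathfrak g \geq 3$ and each rank $n \geq 1$ one obtains a free subgroup $H \cong F_n$ of $MCG(\Sigma_{\mathfrak g})$ that is convex-cocompact (equivalently, $H$ acts properly and with quasiconvex orbits on the curve complex / is purely pseudo-Anosov and undistorted), with $\Gamma_H$ virtually compact special. A free group is automatically torsion-free, so the torsion-freeness hypothesis of the criterion is free. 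The existence of the requisite family for every $\mathfrak g \geq 3$, $n \geq 1$ is what I would cite directly from \cite{MMS}; I would not reprove properness, convex-cocompactness, or virtual specialness.

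The main obstacle is malnormality of $H$. A convex-cocompact free subgroup $H < MCG(\Sigma_{\mathfrak g})$ is purely pseudo-Anosov and quasiconvexly embedded, so its limit set in the boundary of the curve complex (equivalently in $\mathcal{PML}$) is well-defined and $H$-invariant. Malnormality of $H$ amounts to showing that for $g \notin H$ we have $gHg^{-1} \cap H = \{1\}$; for convex-cocompact subgroups this can be detected via the limit set, namely $H$ is malnormal provided distinct $MCG$-translates $g \Lambda_H$ of the limit set meet $\Lambda_H$ in at most a point whenever $g \notin H$, or more robustly provided the stabiliser of $\Lambda_H$ is exactly $H$ and no nontrivial element fixes a translate of an axis common to $H$. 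The cleanest route, which I expect the authors take, is to observe that the \cite{MMS} examples are constructed with enough freedom (e.g. by a sufficiently generic or high-power choice of the pseudo-Anosov generators, or by appealing to a ping-pong / small-cancellation configuration) that the generators play ping-pong on $\mathcal{PML}$ with pairwise disjoint attracting/repelling fixed-point neighbourhoods; such a ping-pong subgroup is automatically malnormal because any nontrivial conjugated element would have to share a fixed point with an element of $H$, forcing the axes to coincide and hence $g$ to normalise, then centralise-modulo, a maximal cyclic subgroup, which ping-pong excludes. I would therefore state malnormality as the one extra verification beyond \cite{MMS}, proving it by exhibiting the ping-pong/disjoint-limit-set condition, and this is the step requiring care.

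Finally, for infinitely many conjugacy classes I would vary the construction parameters: replacing the chosen pseudo-Anosov generators by distinct high powers, or conjugating the seed data by elements of $MCG(\Sigma_{\mathfrak g})$ that do not commensurate $H$, produces infinitely many subgroups that are pairwise non-conjugate. Non-conjugacy can be certified by an invariant preserved under conjugacy but varying across the family — for instance the set of dilatations (stretch factors) of the pseudo-Anosov elements of $H$, or the translation lengths in the curve complex, which take infinitely many distinct values as the powers grow, so no single conjugation can match two members of the family. Collecting (i)–(iii): each such $H$ is a torsion-free malnormal convex-cocompact free group of rank $n$ with $\Gamma_H$ conjugacy separable (being hyperbolic virtually compact special, hence virtually special, so conjugacy separable by \cite{MZ}), whence $H$ is separable by Theorem~\ref{thm:criterion}, and there are infinitely many conjugacy classes. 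This yields exactly the statement of Theorem~\ref{thm:intro_examples}. \qed
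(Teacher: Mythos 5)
Your overall skeleton---import convex-cocompactness, freeness, and virtual compact specialness of $\Gamma_H$ from \cite{MMS}, isolate malnormality as the one genuinely new verification, and vary parameters to get infinitely many conjugacy classes---matches the paper's plan. But your malnormality argument, which you correctly identify as the crux, has a genuine gap. The claim that a ping-pong subgroup generated by a ``sufficiently generic or high-power choice'' of pseudo-Anosovs is automatically malnormal is false: ping-pong constrains the elements of $H$ internally, whereas malnormality is a condition against arbitrary $g\in MCG(\Sigma_{\mathfrak g})$, and nothing in a ping-pong configuration controls such $g$. Worse, the high-power recipe positively obstructs malnormality: if $H=\langle \psi^N,\phi^N,\dots\rangle$ with $N\geq 2$, then $\psi\notin H$ (in a free group, the centraliser of the basis element $\psi^N$ is $\langle\psi^N\rangle$, which contains no $N$-th root of $\psi^N$), and yet $\psi H\psi^{-1}\cap H\supseteq\langle\psi^N\rangle$ is infinite. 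The same hole appears in your more detailed version: the elements of $MCG(\Sigma_{\mathfrak g})$ sharing an axis with $h\in H$ form the elementary closure $E(h)$, which can contain roots of $h$ and torsion lying outside $H$, and ping-pong does not exclude this---it is precisely the phenomenon the introduction of the paper flags when explaining why Theorem~\ref{thm:criterion} fails to apply to arbitrary cyclic convex-cocompact subgroups. Your limit-set criterion for malnormality is fine as a criterion, but verifying it is exactly the original problem; genericity of the generators alone cannot do it.

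The paper's actual argument is quite different and leans on the specific structure of MMS subgroups. Each such $H$ stabilises a tight tree $T$ of homologous non-separating curves, built from large powers of \emph{partial} pseudo-Anosovs (the powers enter in the tree construction, not as full pseudo-Anosov generators of $H$, so the root problem above does not arise). The authors then arrange: (a) all subsurface projection numbers along $T$ are pairwise distinct and larger than $10B$, where $B$ is the constant of the Bounded Geodesic Image Theorem (Theorem~\ref{thm:BGI}), with edges in different $H$-orbits having disjoint sets of projection numbers; and (b) the stabiliser intersections $\stab_{MCG(\Sigma_{\mathfrak g}-v)}(u)\cap\stab_{MCG(\Sigma_{\mathfrak g}-v)}(w)$ are trivial for consecutive $u,v,w$, which is where Lemma~\ref{lem:no_hidden_sym} (pseudo-Anosovs without hidden symmetries) is needed. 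If $gHg^{-1}\cap H$ is infinite, fellow-travelling of bi-infinite geodesics in $T$ and $gT$ plus Theorem~\ref{thm:BGI} forces the translated geodesic through many consecutive vertices of a tight geodesic of $T$; distinctness of projection numbers then forces $gh$, for a suitable $h\in H$, to fix two separated triples of consecutive vertices; and (b) forces $gh$ to be simultaneously a power of two distinct Dehn twists, hence trivial, giving $g\in H$. Finally, your certificate for infinitely many conjugacy classes (dilatation spectra) would itself need a proof that the spectra differ; the paper instead reuses the projection numbers, which are manifestly conjugation-equivariant invariants, by making them differ across the different subgroups. In short: the construction you cite is the right one, but the malnormality step needs this tight-tree/projection argument, and no ping-pong statement will substitute for it.
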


Recalling that convex-cocompactness of $H$ implies hyperbolicity of $\Gamma_H$, combining the above theorem with 
\cite[Theorem 1.1]{MZ} and Theorem~\ref{thm:criterion} gives:

\begin{cor}\label{cor:separable_examples}
For every $\mathfrak g\geq 3$ and $n\geq 1$ there exist infinitely many conjugacy classes of separable  convex-cocompact 
subgroups $H$ of $MCG(\Sigma_{\mathfrak g})$, with each $H\cong F_n$.
\end{cor}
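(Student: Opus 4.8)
The plan is to chain together the three ingredients named immediately before the statement, so the argument is a verification that their hypotheses line up rather than a new construction. First I would invoke Theorem~\ref{thm:intro_examples} to produce, for each fixed $\mathfrak g\geq 3$ and $n\geq 1$, infinitely many conjugacy classes of malnormal convex-cocompact subgroups $H<MCG(\Sigma_{\mathfrak g})$ with $H\cong F_n$ and with corresponding extension $\Gamma_H$ virtually compact special. Since every free group $F_n$ is torsion-free, each such $H$ is automatically torsion-free, so it already satisfies the standing hypotheses (torsion-free, malnormal, convex-cocompact) of Theorem~\ref{thm:criterion}.

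Next I would establish that $\Gamma_H$ is conjugacy separable. Because $H$ is convex-cocompact, the extension $\Gamma_H$ is hyperbolic, as recalled in the introduction (via \cite{Hamenstadt}, or \cite[Theorem 1.3]{FM:conv_cocpt} in the free case relevant here). Combining hyperbolicity with the fact that $\Gamma_H$ is virtually compact special, and hence virtually special, I can apply \cite[Theorem 1.1]{MZ}, which asserts that hyperbolic virtually special groups are conjugacy separable. This yields conjugacy separability of $\Gamma_H$.

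Finally, with $H$ torsion-free, malnormal, and convex-cocompact, and with $\Gamma_H$ now known to be conjugacy separable, Theorem~\ref{thm:criterion} gives that $H$ is separable in $MCG(\Sigma_{\mathfrak g})$. Since Theorem~\ref{thm:intro_examples} already supplies infinitely many conjugacy classes of such subgroups, each isomorphic to $F_n$, the corollary follows at once.

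The substantive mathematics lives entirely inside the three cited results, so there is no genuine obstacle in their combination; the proof is essentially bookkeeping. The only point demanding care is confirming that the hypotheses match exactly at each handoff: that the ``virtually compact special'' conclusion of Theorem~\ref{thm:intro_examples} is precisely the input form required by \cite[Theorem 1.1]{MZ} once hyperbolicity is in hand, and that both the torsion-freeness (from $H\cong F_n$) and the malnormality demanded by Theorem~\ref{thm:criterion} are guaranteed by Theorem~\ref{thm:intro_examples}. Verifying these matchings is the entirety of what the proof need address.
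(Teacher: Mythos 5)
Your proposal is correct and matches the paper's own argument exactly: the paper derives the corollary by combining Theorem~\ref{thm:intro_examples} (which supplies the malnormal, torsion-free, convex-cocompact $H\cong F_n$ with $\Gamma_H$ virtually compact special), hyperbolicity of $\Gamma_H$ from convex-cocompactness, \cite[Theorem 1.1]{MZ} for conjugacy separability, and Theorem~\ref{thm:criterion}. The hypothesis-matching points you flag (torsion-freeness from freeness, malnormality from the construction) are precisely the ones the paper relies on implicitly.
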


We note that Theorem \ref{thm:criterion} does not apply to arbitrary cyclic convex-cocompact subgroups,  even though, in this 
case, $\Gamma_H$ is the fundamental group of a fibred hyperbolic 3-manifold, which is virtually compact special 
\cite{Dufour,Agol}.  Indeed, such cyclic subgroups may not be malnormal, and the maximal elementary subgroups containing them 
may not be torsion-free (in case the maximal elementary subgroup containing the cyclic group is torsion-free, the criterion 
does apply, though).  We wonder:

\begin{question}
To what extent can malnormality and/or torsion-freeness be relaxed in Theorem~\ref{thm:criterion}?
\end{question}


\subsection*{Outline of proofs}
In this paper, we did not pursue the most general versions of our intermediate results, instead leaving questions that we believe to be of independent interest; we highlight the main ones below.

To prove Theorem \ref{thm:criterion}, we use a version of Grossman's criterion for residual finiteness of an outer automorphism group \cite{Grossman} (Grossman's criterion is one way to prove residual finiteness of $MCG(\Sigma_{\mathfrak g})$). This is Proposition \ref{prop:super_grossman} and has two hypotheses. The first one, in the case of convex-cocompact subgroups, is conjugacy separability of the extension group. The second one translates to a question about automorphisms of surface groups. Roughly, given a surface group $G$, a group $K$ of automorphisms of $G$, and automorphism $\phi$ not in $K$, one can ask whether there exists $x\in G$ such that $\phi(x)$ is not conjugate to $k(x)$ for any $k\in K$. In our case, we are interested in $K=\Gamma_H$ for some convex-cocompact subgroup $H$ (where the mapping class group of the punctured surface can be identified with an index 2 subgroup of the automorphism group of $G$).

We check this second condition for suitable $H$ in two steps. First, Proposition \ref{prop:non-pA} reduces the problem above to an analogous problem where $K$ is replaced by finitely many automorphisms. This is done by showing that left cosets of $H$ contain finitely many elements that are not pseudo-Anosov. This is where we use the hypotheses that $H$ is torsion-free and malnormal.




The second step is to construct a suitable $x$, given finitely many automorphisms. We do so specifically for surface groups in Proposition \ref{prop:not-conjugate}, but, inspired by the fact that Grossman's criterion can be applied to acylindrically hyperbolic groups \cite{AMS:pointwise}, we ask:

\begin{question}
Let $G$ be a torsion-free acylindrically hyperbolic group, and let $\phi_1,\dots,\phi_n$ be non-inner automorphisms of $G$. Does there exist $x\in G$ with $x$ and $\phi_i(x)$ non-conjugate for all $i$?
\end{question}

In fact, we do not know the answer to this question even in the special case where $G$ is hyperbolic. One way of answering the question affirmatively would be to show that a "generic" $x$ has the required property, which would be interesting in its own right. That is:

\begin{question}
Let $G$ be a torsion-free acylindrically hyperbolic group, let $\phi$ be a non-inner automorphism of $G$, and let $(w_n)$ be a simple random walk on $G$. Is it true that, with probability going to $1$ as $n$ goes to infinity, $w_n$ is not conjugate to $\phi(w_n)$?
\end{question}

This question is already of interest for $G$ hyperbolic and the techniques of \cite{MaherSisto} seem relevant.

\subsection*{Acknowlegdments}  We are grateful to Michah Sageev for answering a question about MMS subgroups and to Ashot Minasyan for several very helpful comments and corrections.  We also thank the referee for a number of suggestions that improved the paper.

\section{Preliminaries}


\subsection{Curve graphs and convex-cocompactness}\label{subsec:MCG}
Given a finite-type surface $\Sigma$, we denote by $\mathcal C(\Sigma)$ its curve graph. For all finite-type surfaces $\Sigma$, the curve graph $\mathcal C(\Sigma)$ is hyperbolic \cite{MM1} and, in fact, say, $100$-hyperbolic \cite{HPW}, see also \cite{Aougab,Bowditch:uniform,PS:bicorn}. Also, the action of $MCG(\Sigma)$ on $\mathcal C(\Sigma)$ is acylindrical \cite{Bow:tight}.  A finitely generated subgroup of $MCG(\Sigma)$ is \emph{convex-cocompact} if the orbit maps to $\mathcal C(\Sigma)$ are quasi-isometric embeddings~\cite[Theorem 1.3]{KentLeininger}.

\subsection{Elementary subgroups}\label{subsec:elementary}
Throughout the paper, we use the following notation.  If $X$ is a hyperbolic space and $Y$ is a quasiconvex subspace, then the closest-point projection $\pi_Y$ is the bounded set defined by

$$\pi_Y(x)=\{y\in Y:d(x,y)\leq d(x,Y)+1\}.$$

If a group $A$ acts by isometries on $X$, then $\pi_Y$ is $\Stab_A(Y)$--equivariant. Next, we summarise properties of \emph{elementary closures} of loxodromic elements used later, as first considered by \cite{BF:WPD}. 

\begin{prop}\label{prop:E}
Let the group $G$ act acylindrically on the hyperbolic space $X$.  There exists a constant $Q$ such that the following holds.  Let $g\in G$ be a loxodromic element.  Then:
\begin{enumerate}
    \item\label{item:quasiline} There exists a unique maximal elementary subgroup $E(g)$ containing $g$, and a $Q$-quasiconvex $E(g)$--invariant $Q-$quasiline $Y=Y_g$.
    
    \item\label{item:coarse_stab}  For every $D\geq 0$ there exists $L\geq 0$ (depending on $D$ and $g$), such that the following holds.  If $h\in G$ has the property that there exist $x,y\in Y$ with $d(x,y)\geq L$ and $d(x,h\cdot x),d(y,h\cdot y)\leq D$, then $h\in E(g)$.
    
\end{enumerate}
\end{prop}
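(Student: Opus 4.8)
The plan is to establish Proposition~\ref{prop:E} using the theory of acylindrical actions, treating the two items in order since the first constructs the invariant quasiline and the second establishes a uniform-to-$E(g)$ criterion via acylindricity.
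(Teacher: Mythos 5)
Your proposal contains no proof: it merely restates the proposition and announces that the two items will be treated in order. Nothing is actually constructed or verified. You do not say what $E(g)$ is (e.g.\ the stabiliser of the pair of fixed points of $g$ on $\partial X$, or equivalently the set of $h\in G$ with $d(hg^nx_0,g^nx_0)$ bounded on a subsequence), you do not explain why it is the \emph{unique maximal} elementary subgroup containing $g$, you do not exhibit the $Q$-quasiconvex $E(g)$-invariant quasiline $Y_g$ (nor address why the constant $Q$ can be taken uniform over all loxodromic $g$, which is part of the claim and requires the acylindricity constants of the action), and for item (2) you give no argument at all for the key implication --- that an element moving two points of $Y$ at distance $\geq L$ by at most $D$ must lie in $E(g)$. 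That implication is exactly where acylindricity (or the weaker WPD property) does its work, via the standard argument that otherwise one produces too many elements coarsely fixing a long segment, contradicting the acylindricity bound; a proof that never invokes this cannot be complete.

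For comparison, the paper's own proof is a pair of citations: item \eqref{item:quasiline} is \cite[Lemma 6.5]{DGO} and item \eqref{item:coarse_stab} is \cite[Proposition 6.(2)]{BF:WPD}. An acceptable write-up must either cite these (or equivalent) results, or reprove them: for (1), construct a quasi-geodesic axis for $g$, define $E(g)$, prove maximality and uniqueness, and thicken the axis to an $E(g)$-invariant quasiline with uniform constants; for (2), run the WPD/acylindricity argument sketched above. As it stands, your submission is a plan for a proof, not a proof, and the entire mathematical content is missing.
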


\begin{proof}
Item~\eqref{item:quasiline} is given by  \cite[Lemma 6.5]{DGO}.  Item \eqref{item:coarse_stab} is \cite[Proposition 6.(2)]{BF:WPD}.
\end{proof}

\subsection{Intersection of conjugates vs coarse intersections of coset orbits}\label{subsec:intersections}
We also need a certain correspondence between the behaviour of orbits in $\mathcal C(\Sigma)$ of cosets of convex-cocompact subgroups and intersections of conjugates. 
The next lemma, which is used in the proof of Proposition~\ref{prop:non-pA} below, follows from arguments in \cite{AMST}. We note that \cite[Proposition 4.1]{AbbottManning:A_QI} is related to the lemma, though neither implies the other.

\begin{lem}\label{lem:??}
Let $\mathfrak g\geq 2$ and let $H$ be a convex-compact subgroup of $MCG(\Sigma_{\mathfrak g})$.  Let $o\in \mathcal C(\Sigma_{\mathfrak g})$. Suppose that for some $x\in MCG(\Sigma_{\mathfrak g})$ and some $C\geq 0$, the diameter of $xH \cdot o\cap N_C(H \cdot o)\subseteq \mathcal C(\Sigma_{\mathfrak g})$ is infinite. Then the subgroup $H\cap xHx^{-1}$ is infinite.
\end{lem}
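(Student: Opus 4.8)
The plan is to manufacture an infinite-order element of $H\cap xHx^{-1}$ out of the long common fellow-travelling encoded by the hypothesis, using acylindricity to upgrade coarse coincidence into honest group-theoretic coincidence. Since $H$ is convex-cocompact, the orbit map $H\to\mathcal C(\Sigma_{\mathfrak g})$, $h\mapsto h\cdot o$, is a quasi-isometric embedding, so $H\cdot o$ is a quasiconvex copy of $H$; likewise $xH\cdot o=x(H\cdot o)$ is quasiconvex, being an isometric image. The coarse intersection $W:=xH\cdot o\cap N_C(H\cdot o)$ is then a quasiconvex subset of infinite diameter in the hyperbolic space $\mathcal C(\Sigma_{\mathfrak g})$, and it suffices to produce one infinite-order $\gamma\in H\cap xHx^{-1}$, since its powers then give infinitely many elements of the intersection.

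The first step is to record a long common segment. Because $\diam W=\infty$ and $W$ is quasiconvex, fix a geodesic $\sigma=[u,v]$ with $u,v\in W$ and $d(u,v)$ as large as desired, with $\sigma$ staying within a uniform constant $C'$ of both $H\cdot o$ and $xH\cdot o$. For integer-spaced points $q_k$ along $\sigma$ choose $h_k\in H$ with $d(q_k,h_k\cdot o)\leq C'$ and $a_k\in H$ with $d(q_k,xa_k\cdot o)\leq C'$. Since both orbit maps are quasi-isometric embeddings, consecutive transition elements $h_k^{-1}h_{k+1}$ and $a_k^{-1}a_{k+1}$ lie in a fixed finite ball of $H$; thus the sequence of paired transition data takes values in a finite set. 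A pigeonhole (equivalently, a renormalisation argument translating longer and longer initial segments back to the basepoint and extracting a periodic limit) yields indices $k<l$ whose local patterns agree, and hence an element $\gamma:=h_lh_k^{-1}\in H$ that translates $q_k$ to within a bounded distance of $q_l$ along $\sigma$. As $q_k\neq q_l$ and the orbit map is a quasi-isometric embedding, $\gamma\neq1$ and in fact translates a definite amount along the quasigeodesic $W$, so $\gamma$ is loxodromic, hence of infinite order.

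It remains to see $\gamma\in xHx^{-1}$. Writing $x^{-1}h_k=a_kf_k$, the defining inequalities give $d(f_k\cdot o,o)=d(h_k\cdot o,xa_k\cdot o)\leq 2C'$, and a direct computation yields $x^{-1}\gamma x=a_l\,f_lf_k^{-1}\,a_k^{-1}$, which lies in $H$ precisely when $f_lf_k^{-1}\in H$. Here acylindricity (as in Proposition~\ref{prop:E} and the arguments of \cite{AMST}) enters: in the renormalised picture the elements $f_k$ have bounded displacement not only at $o$ but also at a second point lying a distance comparable to $d(u,v)$ away, so by acylindricity the $f_k$ realise only finitely many values. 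Choosing the repeat $k<l$ to also equalise $f_k=f_l$ forces $f_lf_k^{-1}=1$, whence $x^{-1}\gamma x\in H$, i.e. $\gamma\in H\cap xHx^{-1}$, completing the argument.

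The main obstacle is exactly this last two-point control. A displacement bound for the $f_k$ at the single point $o$ is worthless in the non-proper space $\mathcal C(\Sigma_{\mathfrak g})$, where infinitely many elements can nearly fix one point; acylindricity bites only on pairs of points that are far apart. The technical heart of the proof is therefore to set up the renormalisation so that each $f_k$ nearly fixes two points whose distance grows with the length of $\sigma$, which is precisely what the infinite-diameter hypothesis on $W$ supplies and what the arguments of \cite{AMST} are designed to extract; one must also confirm, as indicated above, that the resulting $\gamma$ is genuinely loxodromic rather than of bounded orbit.
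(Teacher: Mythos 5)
Your overall strategy (renormalise along a long segment of the coarse intersection, pigeonhole on the elements $f_k=a_k^{-1}x^{-1}h_k$, and exhibit elements $h_lh_k^{-1}\in H\cap xHx^{-1}$) is the right shape, and the algebra at the end is correct: if $f_k=f_l$ then $h_lh_k^{-1}=xa_la_k^{-1}x^{-1}$ lies in $H\cap xHx^{-1}$, and such elements are nontrivial and pairwise distinct when the indices are far apart. But there is a genuine gap exactly at what you yourself call ``the technical heart'': the claim that each $f_k$ has bounded displacement at a second point whose distance from $o$ is comparable to $d(u,v)$, so that acylindricity forces $\{f_k\}$ to be finite, is asserted rather than proved, and it does not follow from your setup. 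What the construction actually gives is: $f_k$ nearly fixes $o$, and $f_k$ maps $a_k^{-1}a_m\cdot o$ to within $2C'$ of $h_k^{-1}h_m\cdot o$ for any other index $m$. Those last two points are the images of (a point near) $q_m$ under the two isometries $a_k^{-1}x^{-1}$ and $h_k^{-1}$, which are only known to agree coarsely at $q_k$; nothing forces their values at $q_m$ to be close to one another, so no second nearly-fixed point for $f_k$ is produced, and acylindricity has nothing to bite on. Proving that the $f_k$ take only finitely many values (equivalently, that only finitely many cosets $f^{-1}H$ can have orbit fellow-travelling $H\cdot o$ along arbitrarily long segments through $o$) is precisely the content of \cite[Lemma 5.2]{AMST} and \cite[Proposition 5.3]{AMST}. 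That is what the paper's proof does: it verifies that a convex-cocompact subgroup satisfies the suitably modified WPD condition of \cite[Definition 2.12]{AMST}, observes that the proofs of those results go through for orbits in $\mathcal C(\Sigma_{\mathfrak g})$, and concludes that $xH\cdot o\cap N_C(H\cdot o)$ lies at finite Hausdorff distance from $(H\cap xHx^{-1})\cdot o$. So your write-up in effect defers the only hard step to the same citation the paper uses, without verifying its hypotheses or carrying out its argument; the classical one-point pigeonhole you correctly reject is not replaced by anything that works in the non-proper space $\mathcal C(\Sigma_{\mathfrak g})$.

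Two smaller problems, both patchable. First, the pigeonhole on ``transition data'' $(h_k^{-1}h_{k+1},a_k^{-1}a_{k+1})$ does no work: $\gamma=h_lh_k^{-1}$ maps $q_k$ near $q_l$ for every pair $k<l$, with or without matching transitions; the only pigeonhole that matters is $f_k=f_l$, which requires the unproven finiteness above. Second, loxodromicity of $\gamma$ does not follow from its moving one point a long way (a finite-order mapping class can move a point arbitrarily far), so your reduction to producing a single infinite-order element is not justified as written; but this is harmless, since once $f_k=f_l$ holds for one fixed $k$ and infinitely many $l$, the elements $h_lh_k^{-1}$ are already infinitely many distinct elements of $H\cap xHx^{-1}$, which is all the lemma requires.
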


\begin{proof}
The convex-cocompact subgroup $H$ in the statement, acting on $\mathcal C(\Sigma_{\mathfrak g})$, satisfies an appropriately modified version of the \emph{WPD condition} from \cite[Definition 2.12]{AMST}. More specifically, said definition is stated for the action on a Cayley graph with respect to a possibly infinite generating sets. The equivariant quasi-isometry classes of said Cayley graphs are in natural bijection with geodesic spaces with a cobounded action, so dealing with Cayley graphs or cobounded actions is equivalent.

The proof of \cite[Lemma 5.2]{AMST}, and hence of \cite[Proposition 5.3]{AMST}, only uses the WPD property and hence it goes through in our setting (replacing subgroups/cosets with their orbits in $\mathcal C(\Sigma_{\mathfrak g})$ etc.). In particular, we conclude that $xH \cdot o\cap N_C(H \cdot o)\subseteq \mathcal C(\Sigma_{\mathfrak g})$ lies within finite Hausdorff distance of $(H\cap xHx^{-1})\cdot o$. Since the former has infinite diameter, we have that $H\cap xHx^{-1}$ is infinite, as required.
\end{proof}

\subsection{Concatenating geodesics in hyperbolic spaces} 
Later, we will verify that certain elements are pseudo-Anosov by constructing quasigeodesic axes using the following lemma, which we prove by reducing it to \cite[Lemma 4.2]{HW:irred}.

\begin{lem}\label{lem:concat}
For every $\delta\geq 0$ there exists a constant $C_\delta$ such that the following holds. Let $(\alpha_i)_{i\in \mathbb Z}$ be geodesics in a $\delta$-hyperbolic geodesic space, of length bounded independently of $i$, with the terminal point of $\alpha_i$ being the initial point of $\alpha_{i+1}$. Let $$d_{i,j}=diam (N_{3\delta}(\alpha_i)\cap \alpha_j)$$ and assume that for all even $i$, 
$$\ell(\alpha_i)\geq d_{i,i+1}+d_{i,i+2}+d_{i,i-1}+d_{i,i-2}+C_\delta.$$
Then the concatenation of the $\alpha_i$ is a quasi-geodesic.
\end{lem}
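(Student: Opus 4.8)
The plan is to reduce the statement to the consecutive-overlap criterion of \cite[Lemma 4.2]{HW:irred} by bundling the geodesics into consecutive pairs, each containing exactly one of the controlled (even-indexed) segments. Fix a uniform upper bound $M$ for the lengths $\ell(\alpha_i)$. For each $k$ let $\beta_k$ be a geodesic from the initial point of $\alpha_{2k-1}$ to the terminal point of $\alpha_{2k}$; since the terminal point of $\alpha_{2k}$ is the initial point of $\alpha_{2k+1}$, and hence of $\beta_{k+1}$, the $\beta_k$ again concatenate. Each pair has total length at most $2M$ and contains the single even geodesic $\alpha_{2k}$, to which the hypothesis applies. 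The aim is to show that the concatenation of the $\beta_k$ satisfies the hypothesis of \cite[Lemma 4.2]{HW:irred}, and then to transfer the resulting quasigeodesity back to the original path.

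First I would record the standard thin-triangle (tripod) estimates in a $\delta$-hyperbolic space. If two geodesics share an endpoint and their $3\delta$-neighbourhoods meet in a set of diameter $d$, then this coarse overlap coincides, up to an additive $O(\delta)$ error, with an initial segment issuing from the shared endpoint, and a geodesic joining the two far endpoints $O(\delta)$-fellow-travels the complementary outer portions of the two segments. Applying this at the junction of $\alpha_{2k-1}$ and $\alpha_{2k}$ shows that $\beta_k$ tracks the initial part of $\alpha_{2k-1}$ and the terminal part of $\alpha_{2k}$, and yields a lower bound of the shape $\ell(\beta_k)\geq \ell(\alpha_{2k})-d_{2k,2k-1}-O(\delta)$. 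Feeding in the even-index hypothesis for $\alpha_{2k}$ then makes $\ell(\beta_k)$ dominate the relevant shadow terms $d_{2k,2k\pm1}$ and $d_{2k,2k\pm2}$ after enlarging $C_\delta$. Dually, I would bound the coarse overlap $\mathrm{diam}(N_{3\delta}(\beta_k)\cap\beta_{k\pm1})$ of consecutive pairs: near the shared junction, $\beta_k$ and its neighbour track adjacent original segments, so the overlap is controlled by the corresponding $d_{2k,2k\pm1}$, with an additional contribution of the distance-$2$ term $d_{2k,2k\pm2}$ appearing precisely in the degenerate case that an intervening odd geodesic is short, forcing $\beta_{k\pm1}$ to transition immediately to the next even geodesic. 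This is exactly why the hypothesis charges $\alpha_{2k}$ with all four terms $d_{2k,2k\pm1}$ and $d_{2k,2k\pm2}$.

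With the lower bounds on $\ell(\beta_k)$ and the upper bounds on consecutive overlaps in hand, \cite[Lemma 4.2]{HW:irred} gives that the concatenation of the $\beta_k$ is a $(\lambda,\epsilon)$-quasigeodesic with constants depending only on $\delta$. Finally I would transfer this to the original concatenation: the two paths share the junction points, and over each pair they differ only by replacing the geodesic $\beta_k$ by the two-segment path $\alpha_{2k-1}\alpha_{2k}$ of length at most $2M$, which by the tripod estimate lies within Hausdorff distance $O(\delta)$ of $\beta_k$; since each $\ell(\beta_k)\geq 1$ once $C_\delta$ is large, the two paths have uniformly comparable lengths over every range, and a path fellow-travelling a quasigeodesic with comparable length is itself a quasigeodesic. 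The main obstacle is the overlap bookkeeping of the second paragraph: correctly attributing each coarse overlap of consecutive pairs to the available shadow data, and in particular handling the short-odd-geodesic degeneracy that introduces the distance-$2$ terms, so that the four-term even-index hypothesis translates cleanly into the two-term consecutive-overlap hypothesis of \cite[Lemma 4.2]{HW:irred}. Getting these estimates to line up, with all additive $O(\delta)$ errors absorbed into a single $C_\delta$, is the crux of the argument.
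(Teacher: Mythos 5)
Your overall strategy (modify the concatenation and feed it to \cite[Lemma 4.2]{HW:irred}, then transfer back) matches the paper's, but your specific reduction has a genuine gap, and it sits exactly at the step you identify as the crux. The claim that $\diam(N_{3\delta}(\beta_k)\cap\beta_{k-1})$ is controlled by $d_{2k,2k-1}+d_{2k,2k-2}+O(\delta)$ is false. Work in a tree: let $\alpha_{2k-2}$ run rightward along a line for length $L$, let $\alpha_{2k-1}$ backtrack leftward along that line for length $a$ and then climb a branch to height $h_1>3\delta$, and let $\alpha_{2k}$ continue up that branch into fresh territory. All hypotheses of the lemma can be arranged (for $i=2k-2$ one needs only $L\geq a+C_\delta+O(\delta)$, and for $i=2k$ the terms $d_{2k,2k\pm 1},d_{2k,2k\pm2}$ are all $O(\delta)$ here). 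But $\beta_k$, the geodesic from the junction $J_{k-1}$ to the endpoint of $\alpha_{2k}$, begins by travelling distance $a$ backwards along the line, and $\beta_{k-1}$ arrives at $J_{k-1}$ along that same line; hence $\diam(N_{3\delta}(\beta_k)\cap\beta_{k-1})\approx a\approx d_{2k-2,2k-1}$, which can be as large as the uniform length bound $M$, while $d_{2k,2k-1}+d_{2k,2k-2}=O(\delta)$. In other words, the left corner overlap of $\beta_k$ is charged to the hypothesis terms of $\alpha_{2k-2}$, not of $\alpha_{2k}$ (and its appearance has nothing to do with a short intervening odd geodesic: here $\alpha_{2k-1}$ is long). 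This destroys the clean accounting on which your plan rests, namely that $\ell(\beta_k)\gtrsim \ell(\alpha_{2k})-d_{2k,2k-1}$ dominates \emph{both} corner overlaps of $\beta_k$ via $\alpha_{2k}$'s own four terms. One can try to repair this (the backtracked length $a$ is itself absorbed into $\ell(\beta_k)$, and $\ell(\alpha_{2k-1})$ can be compared with $d_{2k-2,2k-1}+d_{2k,2k-1}-d_{2k,2k-2}$), but that refined bookkeeping is nontrivial and is not in your proposal.

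There is a second problem: \cite[Lemma 4.2]{HW:irred} is not a ``two-term consecutive-overlap criterion.'' As the paper uses it (and this is the whole reason the present lemma carries the $d_{i,i\pm2}$ terms), it requires control of the overlaps of each even-indexed segment with its neighbours at distance one \emph{and} two. Applied to your $\beta$-path this forces you to also bound $\diam(N_{3\delta}(\beta_k)\cap\beta_{k\pm2})$, which involves coarse overlaps of original segments at index distance $3$, $4$, $5$ --- quantities the hypothesis says nothing about directly; they are indeed small, but proving this requires iterated thin-polygon covering arguments that are absent from your sketch. The paper's proof sidesteps both issues with a different modification: instead of pairing, it trims each even $\alpha_i$ by exactly $d_{i,i-1}+d_{i,i-2}$ at its start and $d_{i,i+1}+d_{i,i+2}$ at its end and reconnects the trimmed segments by geodesics; a thin-quadrilateral argument then shows every relevant overlap of the new path is at most $10\delta$, so there is nothing to attribute and \cite[Lemma 4.2]{HW:irred} applies directly. (Your final transfer-back step is fine and matches the paper's.)
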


\begin{figure}[h]\label{fig:concat}
\includegraphics[width=0.6\textwidth]{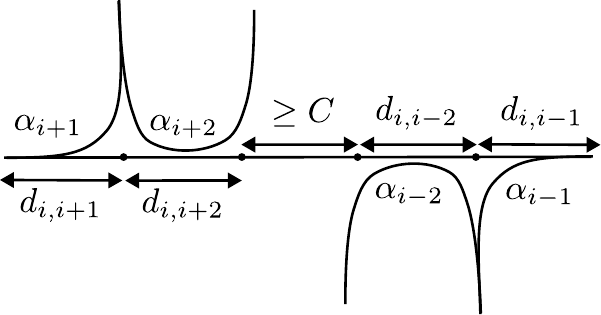}
\caption{An illustration of the hypotheses in Lemma~\ref{lem:concat}.  The long horizontal segment is $\alpha_i$ and the central subsegment is $\alpha_i'$.}
\end{figure}

\begin{proof}
We can assume $\delta>0$ for convenience.  For $i$ even, let $\alpha'_i$ be the subgeodesic of $\alpha_i$ obtained by removing the initial open segment of length $d_{i,i-1}+d_{i,i-2}$ and the final open segment of length $d_{i,i+1}+d_{i,i+2}$. 

For $i$ odd, let $\alpha'_i$ be a geodesic joining the terminal point of $\alpha'_{i-1}$ to the initial point of $\alpha'_{i+1}$.  So, the geodesics $\alpha'_i$ concatenate to form a bi-infinite path.


Considering thin quadrangles, we now show that that \cite[Lemma 4.2]{HW:irred} applies, that is, we show that for $i$ even,
the intersections $$N_{3\delta}(\alpha'_i)\cap \alpha'_{i\pm1}$$ and $$N_{3\delta}(\alpha'_i)\cap \alpha'_{i\pm2}$$ each have diameter at most $10\delta$.


We will do the argument for $\alpha'_{i+1}$ and $\alpha'_{i+2}$, the other arguments being analogous. Denoting by $\alpha''_i$ the terminal segment of $\alpha_i$ that starts at the terminal point of $\alpha'_i$, note that $\alpha'_{i+1}$ is contained in the $2\delta$-neighborhood of $\alpha''_i\cup\alpha_{i+1}\cup\alpha_{i+2}$, and clearly so is $\alpha'_{i+2}$. Hence, it suffices to show that there is no point $p$ on $\alpha'_i$ further than $20\delta$ from the terminal point of $\alpha'_i$ but within $5\delta$ of $\alpha_{i+1}\cup \alpha_{i+2}$ (there is clearly no point $p$ on $\alpha'_i$ further than $20\delta$ from the terminal point and within $5\delta$ of $\alpha''_i$).

\begin{figure}[h]
    \includegraphics[width=0.6\textwidth]{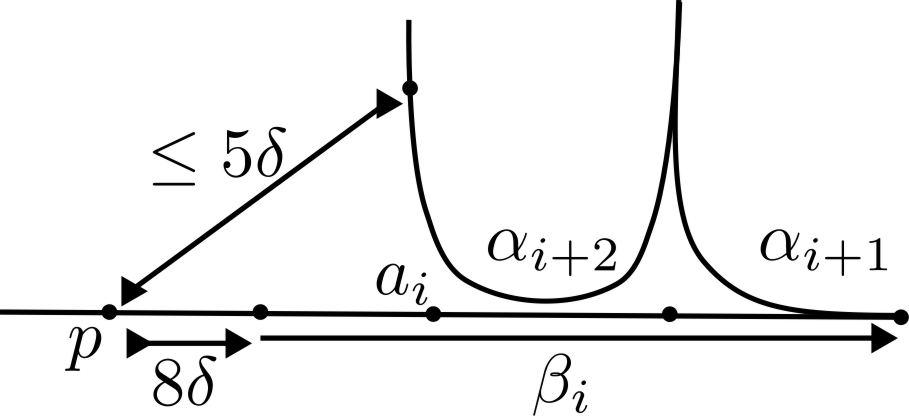}
    \caption{Proof of the nonexistence of $p$ in Lemma~\ref{lem:concat}.  Note that $d(a_i,p)\geq 20\delta$.}
    \label{fig:beta_i}
\end{figure}

Suppose by contradiction that such $p$ exists, and in fact that it lies within $5\delta$ of $\alpha_{i+2}$, the other case being easier.  The following argument is illustrated in Figure~\ref{fig:beta_i}.  Let $a_i$ be the terminal point of $\alpha_i'$, and let $\beta_i$ be the union of the terminal segment of $\alpha_i'$ of length $d(a_i,p)-8\delta$, and $\alpha_i''$. 

Consider the $2\delta$--thin quadrilateral formed by the following four geodesics:
\begin{itemize}
    \item a geodesic $\sigma$ of length at most $5\delta$ from $p$ to a point on $\alpha_{i+2}$;
    \item the subpath of $\alpha_{i+2}$ subtended by the terminal point of $\sigma$ and $\alpha_{i+1}\cap\alpha_{i+2}$;
    \item $\alpha_{i+1}$;
    \item the terminal subpath of $\alpha_i$ from $p$ to the initial point of $\alpha_{i+1}$.
\end{itemize}
By $2\delta$--thinness of the quadrilateral, the fourth path on the list is contained in the union of the $2\delta$--neighbourhoods of the other three paths.  Since the initial point of $\beta_i$ is $8\delta$--far from $p$, and $|\sigma|\leq 5\delta$, we see that no point of $\beta_i$ is $2\delta$--close to $\sigma$, for otherwise it would be $7\delta$--close to $p$, a contradiction.  Hence any point on $\beta_i$ lies $2\delta$-close to either $\alpha_{i+1}$ or $\alpha_{i+2}$.

Note that $\beta_i\cap N_{2\delta}(\alpha_{i+1})$ has diameter at most $d_{i,i+1}+4\delta$.  Indeed, if $p,q\in\beta_i$ are respectively $2\delta$--close to points $p',q'\in\alpha_{i+1}$, then $p',q'$ are in $N_{3\delta}(\alpha_i)\cap\alpha_{i+1}$, so $d(p',q')\leq d_{i,i+1}$, and hence, by the triangle inequality, $d(p,q)\leq d_{i,i+1}+4\delta$.  Similarly, $\beta_i\cap N_{2\delta}(\alpha_{i+2})$ has diameter at most $d_{i,i+2}+4\delta$.  Since these two intersections cover $\beta_i$, we conclude that $|\beta_i|\leq d_{i,i+1}+d_{i,i+2}+8\delta$.  

On the other hand, $\beta_i$ is the concatenation of a terminal segment of $\alpha_i'$ of length at least $12\delta$ (since $p$ is assumed to lie $20\delta$--far from the terminal point of $\alpha_i'$) with the path $\alpha_i''$ (whose length is $d_{i,i+1}+d_{i,i+2}$).  Hence $|\beta_i|>d_{i,i+1}+d_{i,i+2}+8\delta$.  This is a contradiction, so the point $p$ cannot exist.


Hence, for sufficiently large $C_\delta$, \cite[Lemma 4.2]{HW:irred} applies, yielding that the concatenation of the $\alpha'_i$ is a quasi-geodesic. Hence, so is the concatenation of the $\alpha_i$, since all geodesics involved have uniformly bounded length.
\end{proof}

\subsection{Grossman criterion}\label{subsec:grossman}
We now give the generalisation of Grossman's residual finiteness criterion mentioned in the introduction.  Assume that the group $G$ has trivial centre, and let $i:G\to\Inn(G)$ be the natural isomorphism defined by $i(g)(x)=gxg^{-1}$. 

\begin{remark}\label{rem:conjugation_in_extension}
For any $x\in G$ and $\gamma\in\Aut(G)$, 
$$\gamma i(x)\gamma^{-1}=i(\gamma(x)),$$
just by expanding the definition of $i$.
\end{remark}

We also need the usual exact sequence

$$1\to \Inn(G)\to \Aut(G)\stackrel{\Psi}{\longrightarrow}\Out(G)\to 1.$$

The separability version of Grossman's criterion is:

\begin{prop}\label{prop:super_grossman}
Let $G$ be a finitely generated group with trivial centre and let $H<\Out(G)$. Let $\Gamma_H=\Psi^{-1}(H)$ and let $\phi\in \Aut(G)$. Suppose that:

\begin{enumerate}
\item\label{item:pointwise} there exists $x\in G$ such that $\phi(x)\neq h(x)$ for all $h\in\Gamma_H$.

\item $\Gamma_H$ is conjugacy separable.\label{item:conj_sep}
\end{enumerate}
Then there exists a finite group $F$ and a homomorphism $q:\Out(G)\to F$ with $q(\Psi(\phi))\notin q(H)$.
\end{prop}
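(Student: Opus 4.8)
The plan is to run a separability version of Grossman's argument: convert hypothesis \eqref{item:pointwise} into a non-conjugacy statement inside $\Gamma_H$, apply conjugacy separability to produce a finite quotient of $\Gamma_H$, and then transport that quotient to a finite quotient of $\Out(G)$ by passing through a characteristic finite-index subgroup of $G$.

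First I would reformulate hypothesis \eqref{item:pointwise}. By Remark~\ref{rem:conjugation_in_extension}, for any $\psi\in\Gamma_H$ we have $\psi\, i(x)\,\psi^{-1}=i(\psi(x))$, so the conjugacy class of $i(x)$ in $\Gamma_H$ is exactly $\{i(h(x)):h\in\Gamma_H\}$. Since $G$ has trivial centre, $i$ is injective, and hence the condition $\phi(x)\neq h(x)$ for all $h\in\Gamma_H$ is equivalent to saying that $i(\phi(x))=\phi\, i(x)\,\phi^{-1}$ does not lie in that conjugacy class. In other words, hypothesis \eqref{item:pointwise} says precisely that $i(x)$ and $\phi\, i(x)\,\phi^{-1}$ are non-conjugate elements of $\Gamma_H$ (both lie in $\Inn(G)\subseteq\Gamma_H$). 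Hypothesis \eqref{item:conj_sep} then gives a finite quotient $\pi:\Gamma_H\to F_0$ in which $\pi(i(x))$ and $\pi(\phi\, i(x)\,\phi^{-1})$ are non-conjugate.

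The heart of the matter, and the step I expect to be the main obstacle, is to convert this finite quotient of $\Gamma_H$ into a finite quotient of $\Out(G)$; the idea is to pass through $\Inn(G)\cong G$. Set $K_0=\{g\in G: i(g)\in\ker\pi\}$, a finite-index normal subgroup of $G$. Because $\ker\pi$ is normal in $\Gamma_H$ and conjugation by $\psi\in\Gamma_H$ sends $i(g)$ to $i(\psi(g))$, the subgroup $K_0$ is $\Gamma_H$-invariant. Unwinding the non-conjugacy in $F_0$ shows that for every $\psi\in\Gamma_H$ the elements $i(\psi(x))$ and $i(\phi(x))$ have distinct images in $F_0$ (the former is a conjugate of $\pi(i(x))$, the latter is not), which translates to $\psi(x)\not\equiv\phi(x)\pmod{K_0}$ for all $\psi\in\Gamma_H$. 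To make $\Aut(G)$ act, and so reach $\Out(G)$, I would replace $K_0$ by its characteristic core $K=\bigcap_{\alpha\in\Aut(G)}\alpha(K_0)$, which has finite index because $G$ is finitely generated (there are only finitely many subgroups of index $[G:K_0]$). Writing $Q=G/K$ and $\rho:\Aut(G)\to\Aut(Q)$ for the induced action, I claim $\rho(\phi)\notin\rho(\Gamma_H)$: if $\rho(\phi)=\rho(\psi)$ for some $\psi\in\Gamma_H$, then $\phi(x)\equiv\psi(x)\pmod{K}$, hence $\pmod{K_0}$ since $K\subseteq K_0$, contradicting the previous line.

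Finally I would descend to $\Out(Q)$. Since $G\to Q$ is surjective, $\rho(\Inn(G))=\Inn(Q)$, so composing $\rho$ with $\Aut(Q)\to\Out(Q)$ kills $\Inn(G)$ and factors through a homomorphism $q:\Out(G)\to\Out(Q)=:F$ to a finite group. From $\Inn(Q)=\rho(\Inn(G))\subseteq\rho(\Gamma_H)$ together with $\rho(\phi)\notin\rho(\Gamma_H)$, one checks that the class of $\rho(\phi)$ in $\Out(Q)$ does not lie in $q(H)$; that is, $q(\Psi(\phi))\notin q(H)$, as required. The genuinely delicate point is the middle paragraph: conjugacy separability is a statement about $\Gamma_H$, and the work lies in observing that restricting the separating quotient to $\Inn(G)$ yields a $\Gamma_H$-invariant finite-index subgroup of $G$ whose characteristic core still witnesses $\phi\notin\Gamma_H$ at the level of $\Out(G)$.
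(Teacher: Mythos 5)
Your proof is correct and follows essentially the same route as the paper's: translate hypothesis \eqref{item:pointwise} into non-conjugacy of $i(x)$ and $i(\phi(x))$ inside $\Gamma_H$, apply conjugacy separability to get $\pi:\Gamma_H\to F_0$, pass to the finite-index kernel of $\pi\circ i$ and then to a characteristic core $K$, and descend the induced map $\Aut(G)\to\Aut(G/K)$ to a finite quotient $\Out(G)\to\Out(G/K)$ using $\Inn(G/K)\subseteq\rho(\Gamma_H)$. The only (immaterial) differences are that you take the core to be $\bigcap_{\alpha\in\Aut(G)}\alpha(K_0)$ where the paper intersects all subgroups of $G$ of index $[G:K_0]$, and your observation that $K_0$ is $\Gamma_H$-invariant, while true, is never actually needed in the argument.
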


\begin{remark}
Taking $H=\{1\}$ in the above proposition yields \cite[Theorem 1]{Grossman} in the case where $G$ has trivial centre.
\end{remark}

\begin{proof}[Proof of Proposition~\ref{prop:super_grossman}]
Item \eqref{item:pointwise} provides $x\in G$ such that $\phi(x)\notin \Gamma_H\cdot x$.

First observe that $i(\phi(x))$, which is contained in $\Gamma_H$, cannot be conjugate in $\Gamma_H$ to $i(x)$.  Indeed, if it were, then for some $\gamma\in\Gamma_H$, we would have $i(\phi(x))=\gamma i(x)\gamma^{-1}=i(\gamma(x))$, with the second equality following from Remark~\ref{rem:conjugation_in_extension}.  Injectivity of $i$ would then imply $\gamma(x)=\phi(x)$, a contradiction. 


Hence, by item \eqref{item:conj_sep}, there is a finite quotient $\pi:\Gamma_H\to F_0$ such that $\pi(i(\phi(x)))$ is not conjugate in $F_0$ to $\pi(i(x))$.  It follows that $\pi(i(\phi(x)))\not \in \pi(i(\Gamma_H\cdot x))$.  Indeed, by Remark~\ref{rem:conjugation_in_extension}, for all $\gamma\in\Gamma_H$, we have that $\pi(i(\gamma(x)))$ is conjugate in $F_0$ to $\pi(i(x))$, which is not conjugate to $\pi(i(\phi(x)))$.

Consider the quotient $\pi\circ i:G\to \bar G\leq F_0$.  Let $K$ be the intersection of all subgroups of $G$ with the same index as $\kernel(\pi\circ i)$. This is a characteristic subgroup, and furthermore, since $G$ is finitely generated, $K$ has finite index in $G$. Denote the corresponding quotient map $q'':G\to G/K$. Since $\pi\circ i$ factors through $q''$ we have $q''(\phi(x))\notin q''(\Gamma_H\cdot x)$. 

Since $K$ is characteristic, $q''$ induces a homomorphism $q':\Aut(G)\to\Aut(G/K)$ defined by $q'(\gamma)(gK) = q''(\gamma(g))$. This has the property that $q'(\phi)\notin q'(\Gamma_H)$, because $q'(\phi)(xK)\notin q'(\Gamma_H)(xK)$, since the former is $q''(\phi(x))$ and the latter is $q''(\Gamma_H\cdot x)$.

The homomorphism $q'$ descends to a homomorphism $q:\Out(G)\to\Out(G/K)$ fitting into the following commutative diagram:

\begin{center}
    $
    \begin{diagram}
    \node{\Aut(G)}\arrow{e,t}{\Psi}\arrow{s,l}{q'}\node{\Out(G)}\arrow{s,r}{q}\\
    \node{\Aut(G/K)}\arrow{e,b}{\Psi_K}\node{\Out(G/K)}\\
    \end{diagram}
    $
\end{center}

Since $q'(\phi)\notin q'(\Gamma_H)$ and $\Inn(G/K)=q'(\Inn(G))<q'(\Gamma_H)$, we have $\Psi_K(q'(\phi))\notin \Psi_K(q'(\Gamma_H))$. In view of the commutative diagram, the former is $q(\Psi(\phi))$ and the latter is $q(H)$, so that we found a finite quotient of $\Out(G)$ that separates $\Psi(\phi)$ from $H$, namely $F=\Out(G/K)$, which is finite since $K$ has finite index in $G$.
\end{proof}

\section{Mapping class group lemmas}\label{sec:MCG_lemmas}
In Section~\ref{subsec:finitely_many_non_pA}, we reduce condition \eqref{item:pointwise} from Proposition~\ref{prop:super_grossman} to a statement about finitely many automorphisms of a surface group; this is used in the proof of Theorem~\ref{thm:criterion}.  In Section~\ref{subsec:no_hidden_symmetries}, we prove a lemma needed for verifying malnormality of the examples from Theorem~\ref{thm:intro_examples}.

\subsection{Finitely many non-pseudo-Anosovs}\label{subsec:finitely_many_non_pA}
The goal of this subsection is to prove:

\begin{prop}\label{prop:non-pA}
Let $\Sigma$ be a finite-type surface and let $H<MCG(\Sigma)$ be a torsion-free malnormal convex-cocompact subgroup. Let $g\in MCG(\Sigma)-H$ have infinite order. Then the coset $gH$ contains finitely many elements that are not pseudo-Anosov.
\end{prop}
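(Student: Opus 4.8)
The plan is to show that $gh$ acts loxodromically on $\mathcal{C}(\Sigma)$ for all but finitely many $h\in H$; since the action of $MCG(\Sigma)$ on $\mathcal{C}(\Sigma)$ is acylindrical (so it has no parabolics) and an element is pseudo-Anosov precisely when it is loxodromic, this proves the proposition. Fix a basepoint $o\in\mathcal{C}(\Sigma)$ and write $\Lambda=H\cdot o$; since $H$ is convex-cocompact, $\Lambda$ is quasiconvex and the orbit map $H\to\Lambda$ is a quasi-isometric embedding, hence proper. For $h\in H$ set $\Lambda_i=(gh)^i\Lambda$. The computation underlying everything is the identity $(gh)\Lambda=g(hH)\cdot o=g\Lambda$, which gives $\Lambda_{i+1}=(gh)^i\cdot g\Lambda$; translating by the isometry $(gh)^{-i}$ then identifies the coarse intersection of any two consecutive translates $\Lambda_i,\Lambda_{i+1}$ with that of the single fixed pair $\Lambda$ and $g\Lambda$, independently of both $i$ and $h$.

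Next I would bound that fixed coarse intersection. As $g\notin H$ and $H$ is malnormal (and torsion-free), $H\cap gHg^{-1}$ is finite; Lemma~\ref{lem:??} then forces $g\Lambda\cap N_C(\Lambda)$ to have finite diameter, say at most $M_0=M_0(C)$. This is the only place the hypotheses on $H$ enter. By the previous paragraph, every pair of consecutive translates coarsely meets in diameter $\le M_0$, uniformly in $h$.

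I would then build a $(gh)$-invariant bi-infinite axis and feed it to Lemma~\ref{lem:concat}. Let $\lambda_i$ be a geodesic representative of the portion of $\Lambda_i$ between the projections $\pi_{\Lambda_i}(\Lambda_{i-1})$ and $\pi_{\Lambda_i}(\Lambda_{i+1})$, and let $\beta_i$ be a geodesic bridge from the endpoint of $\lambda_i$ to the start of $\lambda_{i+1}$; quasiconvexity puts each $\lambda_i$ in a uniform neighbourhood of $\Lambda_i$. By equivariance all the $\lambda_i$ have a common length $\ell(h)=d\big(\pi_\Lambda((gh)^{-1}\Lambda),\pi_\Lambda(g\Lambda)\big)$ and all the $\beta_i$ a common, uniformly bounded length; since the first projection point escapes to infinity in $\Lambda$ as $h\to\infty$ while the second is fixed, $\ell(h)\to\infty$, so $\ell(h)$ is bounded for only finitely many $h$. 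Now apply Lemma~\ref{lem:concat} with $\alpha_{2i}=\lambda_i$ and $\alpha_{2i+1}=\beta_i$. For the long (even) segments, the four overlaps to be controlled are those of $\lambda_i$ with $\lambda_{i\pm1}$ and with the adjacent bridges $\beta_i,\beta_{i-1}$: the former are bounded by the consecutive-translate coarse intersection ($\le M_0+O(\delta)$, via quasiconvexity), the latter by the bridge length. Thus, once $\ell(h)$ exceeds a constant independent of $h$, Lemma~\ref{lem:concat} shows the concatenation is a quasigeodesic; being $(gh)$-invariant and bi-infinite, it forces $gh$ to be loxodromic, hence pseudo-Anosov.

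The one subtlety, and the main thing to get right, is the \emph{uniformity} of the overlap bounds in $h$. A more naive axis, obtained by concatenating the translates $(gh)^i[o,(gh)o]$, runs straight into the distance-$2$ terms of Lemma~\ref{lem:concat}: these would require bounding the coarse intersection of $\Lambda$ with $(gh)g\Lambda$, whose finiteness from Lemma~\ref{lem:??} and malnormality is only established element-by-element and is not obviously uniform as $h$ varies. The alternating construction above is designed precisely to circumvent this: placing the long segments at even indices arranges that the distance-$2$ neighbour of a long segment $\lambda_i$ is again a \emph{consecutive} translate $\Lambda_{i\pm1}$, so every overlap that Lemma~\ref{lem:concat} asks about reduces to the single fixed pair $\Lambda,g\Lambda$ and is therefore uniformly bounded. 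It then only remains to note that the finitely many $h$ with $\ell(h)$ small contribute at most finitely many (possibly non-pseudo-Anosov) elements to $gH$, completing the proof.
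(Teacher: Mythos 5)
Your proof is correct, but it takes a genuinely different route from the paper's. The paper treats $gh$ as a product: it fixes a $Q$--quasiconvex set $Y_g$ for $g$ (a clique when $g$ is reducible, a quasi-axis when $g$ is pseudo-Anosov --- this is where the infinite order of $g$ enters) and the quasi-axis $Y_h$ of each nontrivial $h\in H$ (this is where torsion-freeness enters, making every nontrivial $h$ pseudo-Anosov), and then runs a two-case ``loxodromic products'' criterion (Lemma~\ref{lem:gh}): either $d(Y_g,Y_h)$ is large, or it is small and $\tau_h$ is large; malnormality is used via Lemma~\ref{lem:??} to bound $Y_g\cap N_L(H\cdot o)$ and $\diam \pi_{Y_h}(gY_h)$ (Claim~\ref{claim:malnormality_consequence}), and Claim~\ref{claim:short} counts the finitely many $h$ escaping both cases. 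You instead work with the $\langle gh\rangle$--translates $\Lambda_i=(gh)^i\Lambda$ of the single orbit $\Lambda=H\cdot o$; your identity $(gh)\Lambda=g\Lambda$ makes every consecutive pair $(\Lambda_i,\Lambda_{i+1})$ an isometric image of the fixed pair $(\Lambda,g\Lambda)$, whose coarse intersection is bounded by malnormality together with Lemma~\ref{lem:??}, and this is exactly the uniformity in $h$ that a naive axis through the points $(gh)^io$ would lack. Your alternating long-segment/bridge indexing, which ensures that the distance-two terms of Lemma~\ref{lem:concat} again involve only consecutive translates, is sound --- it is in fact the same device the paper uses inside the proof of Lemma~\ref{lem:gh}, where the long segments are also placed at the even indices --- and your finitely many exceptions come from properness of the orbit map (convex-cocompactness) rather than from Claim~\ref{claim:short}. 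What your route buys: no case analysis on the type of $g$, no need for Proposition~\ref{prop:E} or for Lemmas~\ref{lem:simplex_translate} and~\ref{lem:geodesic-translate}, and, notably, your argument nowhere uses torsion-freeness of $H$ (malnormality alone makes $H\cap gHg^{-1}$ trivial, so your parenthetical ``and torsion-free'' does no work) nor that $g$ has infinite order, so it proves a formally stronger statement. What the paper's route buys is the freestanding two-case criterion Lemma~\ref{lem:gh} and the explicit control of which elements can fail (short translation length, axis near $o$).

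Two points you should make explicit in a final write-up, both routine. First, make the choices equivariant: pick $\lambda_0$ and $\beta_0$ once and set $\lambda_i=(gh)^i\lambda_0$, $\beta_i=(gh)^i\beta_0$; then the $\lambda_i$ really do have constant length and the bridges have length depending only on $g$, since both endpoints of $\beta_0$ lie in the fixed bounded sets $\pi_\Lambda(g\Lambda)$ and $\pi_{g\Lambda}(\Lambda)$ (bounded because bounded coarse intersection of quasiconvex sets gives bounded mutual projections, as sketched in the paper's Claim~\ref{claim:malnormality_consequence}). Second, justify the escape to infinity: $\pi_\Lambda\bigl((gh)^{-1}\Lambda\bigr)=h^{-1}\pi_\Lambda(g^{-1}\Lambda)$, using $h\Lambda=\Lambda$ and equivariance of closest-point projection, after which properness of the orbit map gives $\ell(h)\to\infty$ outside a finite set of $h$.
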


The proof is postponed until after some preparatory lemmas.  We will use the following lemma to check that specific elements are pseudo-Anosov. The lemma gives two alternative conditions, the first one is designed to show that the product of two elements one of which acts elliptically and has "large rotation angle" is loxodromic. The second property, instead, captures a product were one element is loxodromic with suffieicntly large translation length.

\begin{lem}[Loxodromic products]\label{lem:gh}
For every $\delta,Q\geq 0$ there exists a constant $C_{Q,\delta}$ with the following property. Let $A$ be a group acting on a $\delta$-hyperbolic space $X$. Let $g_0,g_1\in A$ be elements that respectively stabilise $Q$-quasi-convex sets $Y_0,Y_1\subset X$. Let $\pi_i$ be closest-point projection to $Y_i$. Assume that one of the following holds:
\begin{enumerate}
    \item let $y_i\in \pi_i(Y_{i+1})\subseteq Y_i$ and suppose that $d(Y_0,Y_1)\geq C_{Q,\delta}$ and $$\diam_X(N_{3\delta}([y_0,y_1])\cap g_i[y_0,y_1])\leq \frac{C_{Q,\delta}}{3}$$ for $i=0,1$; or\label{item:far-apart}
    \item $diam(\pi_1(g_0Y_1))\leq C_{Q,\delta}/3$ and $d(g_1y,y)\geq C_{Q,\delta}+2d(y,g_0y)$ for some $y\in Y_1$.\label{item:long-translation}
\end{enumerate}
Then $g_0g_1$ is loxodromic.
\end{lem}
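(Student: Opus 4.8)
The plan is to show in each case that $g_0g_1$ translates along a bi-infinite quasigeodesic, which forces it to be loxodromic. To produce such a quasigeodesic I will build an explicit $(g_0g_1)$-invariant concatenation of geodesics and apply Lemma~\ref{lem:concat}, arranging the indexing so that the \emph{long} segments carry even indices (these are the ones to which the length hypothesis of Lemma~\ref{lem:concat} applies) and the short connectors carry odd indices. Throughout I will use two standard facts about a $Q$-quasiconvex set $Y$ in a $\delta$-hyperbolic space: the geodesic $[\pi_Y(z),z]$ from a nearest-point projection leaves $Y$ coarsely linearly, so it is $3\delta$-close to $Y$ only within distance $O(Q+\delta)$ of its initial point; and for any set $Z$, the points of $Y$ lying within bounded distance of $Z$ form a set of diameter at most $\diam(\pi_Y(Z))+O(Q+\delta)$.

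For Case~\eqref{item:far-apart}, write $\sigma=[y_0,y_1]$, a bridge of length $d(Y_0,Y_1)\ge C_{Q,\delta}$ joining the mutual nearest-point projections. Since $g_0$ and $g_1$ fix $Y_0$ and $Y_1$ setwise, the translates $\sigma,g_0\sigma,g_0g_1\sigma,\dots$ chain into a bi-infinite path whose even segments are translates of $\sigma$ and whose odd segments are the short connectors $[y_0,g_0y_0]\subset Y_0$ and $[g_0y_1,g_0g_1y_1]\subset g_0Y_1$ and their $(g_0g_1)$-translates; this path is invariant under $g_0g_1$, which shifts it by four segments. The bridge-bridge overlaps are $d_{i,i+2}=\diam(N_{3\delta}(\sigma)\cap g_0\sigma)$ and $d_{i,i-2}=\diam(N_{3\delta}(\sigma)\cap g_1^{-1}\sigma)$; the first is $\le C_{Q,\delta}/3$ by hypothesis, and the second equals $\diam(N_{3\delta}(g_1\sigma)\cap\sigma)$ after translating by $g_1$, hence is $\le C_{Q,\delta}/3+O(\delta)$. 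The bridge-connector overlaps are $O(Q+\delta)$ by the first standard fact, since each connector runs along $Y_0$ or $g_0Y_1$ while $\sigma$ leaves that set immediately. Thus each even segment has length $\ge C_{Q,\delta}$ while its four relevant overlaps sum to at most $\tfrac23 C_{Q,\delta}+O(Q+\delta)$, so the hypothesis of Lemma~\ref{lem:concat} holds once $C_{Q,\delta}$ is chosen with $\tfrac13 C_{Q,\delta}\ge O(Q+\delta)+C_\delta$.

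For Case~\eqref{item:long-translation}, decompose the step $y\mapsto g_0g_1y$ as $y\to g_0y\to g_0g_1y$, giving a short jog $J=[y,g_0y]$ of length $d(y,g_0y)$ followed by a long segment $L=g_0[y,g_1y]\subset g_0Y_1$ of length $d(y,g_1y)\ge C_{Q,\delta}+2d(y,g_0y)$. Chaining the $(g_0g_1)$-translates of $J,L$ gives a $(g_0g_1)$-invariant bi-infinite path, shifted by two segments, with the long segments even and the jogs odd. The adjacent long segments are $L\subset g_0Y_1$, $g_0g_1L\subset g_0g_1g_0Y_1$ and $(g_0g_1)^{-1}L\subset Y_1$; after translating, each overlap $d_{i,i\pm2}$ is bounded by the diameter of the points of $Y_1$ close to $g_0Y_1$, hence by $\diam(\pi_1(g_0Y_1))+O(Q+\delta)\le C_{Q,\delta}/3+O(Q+\delta)$ using the second standard fact and the hypothesis. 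Each long-jog overlap $d_{i,i\pm1}$ is at most the jog length $d(y,g_0y)$. Hence for an even segment the length hypothesis of Lemma~\ref{lem:concat} reads $d(y,g_1y)\ge 2d(y,g_0y)+\tfrac23 C_{Q,\delta}+O(Q+\delta)+C_\delta$; the term $2d(y,g_0y)$ cancels against the corresponding term in the standing hypothesis $d(y,g_1y)\ge C_{Q,\delta}+2d(y,g_0y)$, and the remaining inequality holds once $C_{Q,\delta}$ is large in terms of $Q,\delta,C_\delta$.

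In both cases Lemma~\ref{lem:concat} shows the concatenation is a quasigeodesic; as it is bi-infinite and $g_0g_1$ acts on it by translation, $g_0g_1$ is loxodromic. The main obstacle I anticipate is establishing the two coarse quasiconvexity estimates uniformly—and, in Case~\eqref{item:far-apart}, confirming that the connectors genuinely contribute only $O(Q+\delta)$ to the bridge overlaps regardless of their own (bounded) lengths—together with the bookkeeping required to check that the translates chain into an honestly $(g_0g_1)$-invariant path with the long segments in even position.
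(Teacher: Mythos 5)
Your proof is correct and takes essentially the same approach as the paper: in both cases the paper builds exactly the same $\langle g_0g_1\rangle$--invariant concatenation (bridges, respectively translates of $g_0[y,g_1y]$, as the even segments and the short connectors/jogs as the odd ones), bounds $d_{i,i\pm 2}$ by the stated hypotheses and $d_{i,i\pm 1}$ by the fact that the bridges are coarsely shortest geodesics between quasiconvex sets (respectively by the jog length, absorbed by the translation-length hypothesis), and then applies Lemma~\ref{lem:concat}. The coarse bookkeeping you flag at the end is precisely what the paper also invokes without further detail.
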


\begin{proof}
In case \eqref{item:far-apart} we can form a bi-infinite quasi-geodesic on which $g_0g_1$ acts by translation by concatenating suitable translates of $U=[y_1,y_0]$, $V=[y_0,g_0y_0]$, and $W=[y_1,g_1y_1]$, with every other geodesic being a translate of $[y_0,y_1]$.  Specifically, the $\langle g_0g_1\rangle$--translates of the concatenation 
$$U\cdot V\cdot (g_0\bar U)\cdot (g_0W),$$
which joins $y_1$ to $g_0g_1y_1$,
concatenate to form a bi-infinite path, shown in Figure~\ref{fig:gh}.  Here the overline denotes the reverse path. In the notation of Lemma \ref{lem:concat}, by our assumption we have $d_{i,i\pm 2}\leq C_{Q,\delta}/3$, while the $d_{i,i\pm 1}$ are bounded independently of $C_{Q,\delta}$ since the $[y_0,y_1]$ are coarsely shortest geodesics connecting quasiconvex sets.  So Lemma~\ref{lem:concat} implies $g_0g_1$ is loxodromic provided $C_{Q,\delta}$ is sufficiently large in terms of $Q$ and the constant $C_\delta$ from the lemma, which depends only on $\delta$.

\begin{figure}[h]
\begin{overpic}[width=0.6\textwidth]{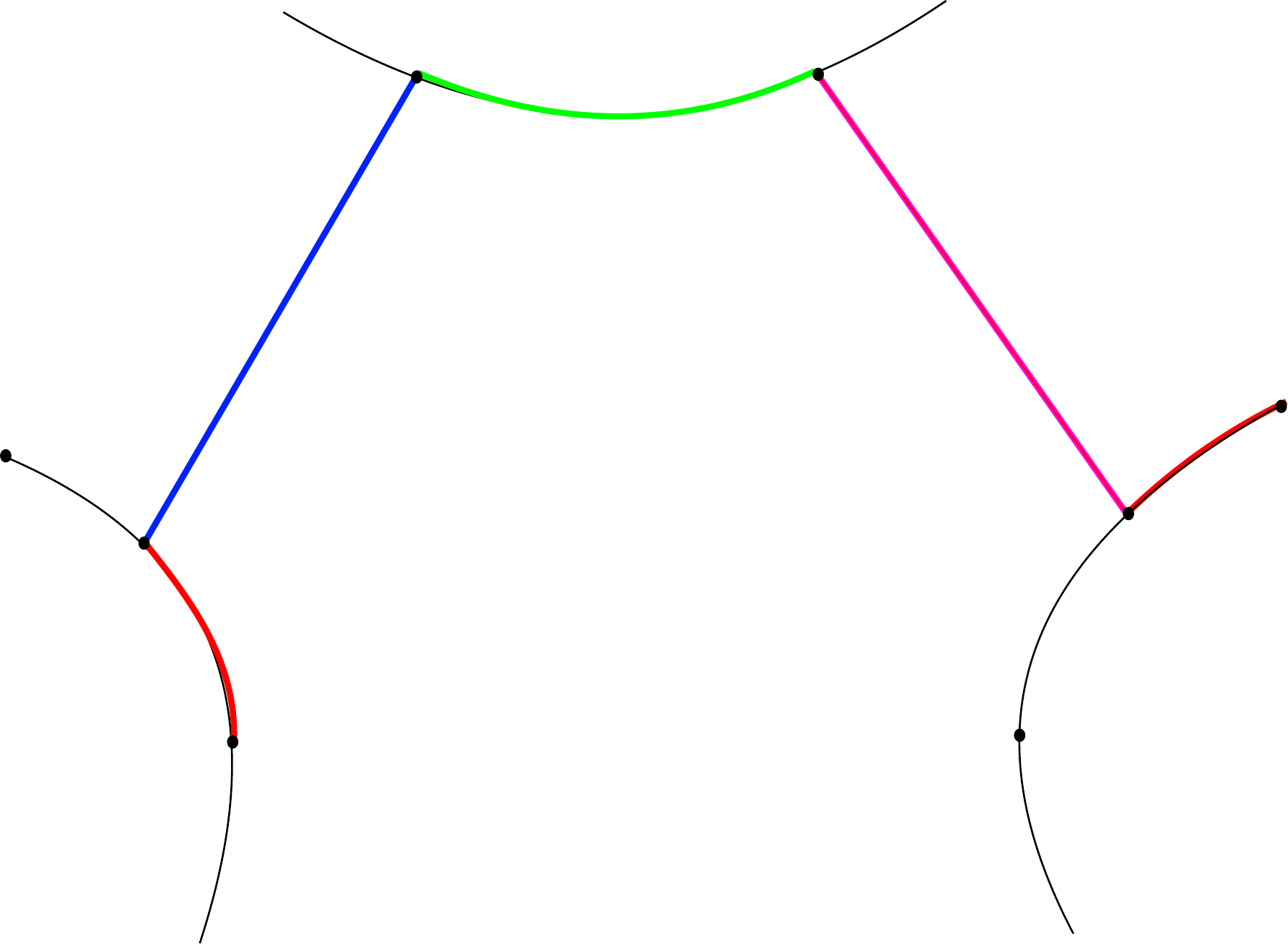}
\put(12,5){$Y_1$}
\put(19,68){$Y_0$}
\put(83,5){$g_0Y_1$}
\put(18,50){$U$}
\put(48,60){$V$}
\put(66,50){$g_0\bar U$}
\put(90,42){$g_0W$}
\put(7,27){$y_1$}
\put(32,70){$y_0$}
\put(59,70){$g_0y_0$}
\put(86,30){$g_0y_1$}
\put(20,15){$g_1^{-1}y_1$}
\put(101,41){$g_0g_1y_1$}
\end{overpic}
\caption{Proof of Lemma~\ref{lem:gh}.\eqref{item:far-apart}.  The paths concatenated to form the $g_0g_1$--axis are coloured according to their $\langle g_0g_1\rangle$--orbits.}\label{fig:gh}
\end{figure}

Case \eqref{item:long-translation} is similar: we concatenate translates of $U=[y,g_1y]$ and $V=[y,g_0y]$.  Specifically, the $\langle g_0g_1\rangle$--translates of the concatenation $V\cdot g_0U$
concatenate to form a bi-infinite path.  Again, our assumptions allow us to apply Lemma \ref{lem:concat}.  Indeed, successive translates of $U$ have $d_{i\pm 2}$ bounded since intersections of neighborhoods of the relevant translates of $Y_1$ have diameter bounded in terms of the diameter of $\pi_1(g_0Y_1)$, so the assumption on $d(y,g_1y)$, which controls $d_{i\pm1}$, shows that Lemma~\ref{lem:concat} applies (with the translates of $U$ as the even-indexed $\alpha_i$) provided $C_{Q,\delta}$ is large enough in terms of $Q$ and $\delta$.
\end{proof}

We will also need the following lemmas to check condition \ref{item:far-apart} in Lemma~\ref{lem:gh} when using it.  Recall that for curve graphs we take $\delta=100$.

The following lemma can be formulated more generally for a group acting acylindrically on a hyperbolic space and a infinite-order element whose orbits are bounded, but we prefer to state just the special case we need to avoid discussing coarse fixed point sets.

\begin{lem}\label{lem:simplex_translate}
Let $\Sigma$ be a finite-type surface. Then there exists $D\geq 0$ with the following property. Let $g\in MCG(\Sigma)$ be an infinite order reducible element, fixing the clique $Y$ in $\mathcal C(\Sigma)$. Then for any geodesic $\gamma$ starting in $Y$ we have that $N_{300}(\gamma)\cap g\gamma$ has diameter at most $D$.
\end{lem}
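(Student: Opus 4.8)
The plan is to exploit the acylindricity of the $MCG(\Sigma)$--action on $\mathcal C(\Sigma)$, together with the fact that $g$ has infinite order and fixes a clique $Y$, so that $g$ acts elliptically on $\mathcal C(\Sigma)$ with coarsely bounded orbits. Concretely, since $Y$ is a clique fixed by $g$, the diameter of the $g$--orbit of any point of $Y$ is $1$. The heart of the matter is to show that if a geodesic $\gamma$ starting in $Y$ and its translate $g\gamma$ fellow-travel for a long time, then $g$ would have to coarsely fix a large segment, contradicting acylindricity. So the main step is a ``long fellow-travelling forces coarse fixing'' argument run against the acylindricity constants.

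First I would set up the acylindricity constants: given $\delta = 100$ and the constant $D_0$ from acylindricity for the displacement threshold (say, displacement $\leq 1000$), acylindricity produces constants $R$ and $N$ such that any two points at distance $\geq R$ are coarsely moved by at most $N$ elements. Then I would argue as follows. Suppose $N_{300}(\gamma) \cap g\gamma$ has large diameter, so there are points $x, y \in g\gamma$ with $d(x,y)$ large, each within $300$ of $\gamma$. Let $o \in Y$ be the initial point of $\gamma$; then $g o$ is the initial point of $g\gamma$, and $d(o, g o) \leq 1$ since $Y$ is a clique. Using $\delta$--hyperbolicity and the fact that both $\gamma$ and $g\gamma$ emanate from the nearly-coincident points $o$ and $go$, I would show that along the overlap region the points of $g\gamma$ are coarsely displaced from the corresponding points of $\gamma$ by the element $g$ by a uniformly bounded amount. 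That is, for a point $z$ on the overlap, $g^{-1}z$ lies close to $\gamma$ near $z$, so $d(z, g^{-1} z)$ is bounded by some constant depending only on $\delta$.

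Next I would iterate: the same estimate applies to $g^k$ for each fixed $k$, since $g^k$ also fixes $Y$ and the orbit $\{o, go, \ldots, g^k o\}$ has diameter $\leq k$. More efficiently, I would directly invoke Proposition~\ref{prop:E}\eqref{item:coarse_stab} only if $g$ were loxodromic; since here $g$ is elliptic, I instead use acylindricity in the following packaged form: if there were points $x, y$ with $d(x,y) \geq R$ and both $g$ and all its powers $g^k$ (for $k$ up to the order bound, but $g$ has infinite order) coarsely fix $x$ and $y$ within bound $N$, that forces infinitely many distinct elements $g^k$ all moving $x,y$ by a bounded amount, contradicting the acylindricity constant $N$. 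Thus the diameter of the overlap cannot exceed $R$, and setting $D = R$ (adjusted by the additive $\delta$--hyperbolicity error terms) completes the proof.

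The main obstacle I anticipate is the passage from ``$\gamma$ and $g\gamma$ fellow-travel'' to ``$g$ coarsely fixes two far-apart points,'' because $g$ maps $\gamma$ to $g\gamma$ rather than to itself, so one must carefully use the coincidence of the basepoints $o, go \in Y$ to synchronise the two geodesics and deduce that the displacement of overlap points is bounded. The cleanest route is to note that $\gamma$ and $g\gamma$ are geodesics with $2$--close initial points, so by a standard Morse/thin-triangle estimate they remain $(2\delta + 1)$--close \emph{parametrised by arc length} up to the length of their overlap; hence a point $z = \gamma(t)$ in the overlap satisfies $d(z, g\gamma(t)) \leq 2\delta + 1$, and since $g\gamma(t) = g \cdot \gamma(t)$, we get $d(\gamma(t), g\gamma(t)) \leq 2\delta+1$, exhibiting every overlap point as coarsely $g$--fixed. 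Feeding two such points at distance $\geq R$ into acylindricity (which bounds the number of elements coarsely fixing a long segment, and $g, g^2, \ldots$ are infinitely many distinct such elements) yields the contradiction and pins down $D$ in terms of the acylindricity constants for $\mathcal C(\Sigma)$.
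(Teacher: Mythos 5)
Your overall strategy coincides with the paper's: use the $g$-invariant clique $Y$ to synchronise the basepoints of $\gamma$ and $g\gamma$, deduce from $\delta$-hyperbolicity (thin quadrilaterals) that a long overlap forces a parametrised bound $d(\gamma(t),g\gamma(t))\leq 2\delta+O(1)$, and then contradict acylindricity using powers of the infinite-order element $g$. However, there is a genuine gap at the iteration step. Acylindricity gives you, for a \emph{fixed} displacement threshold $\epsilon$, constants $R(\epsilon)$ and $N(\epsilon)$ bounding the number of group elements that move two points at distance at least $R(\epsilon)$ by at most $\epsilon$. Your argument establishes the uniform displacement bound $d(z,gz)\leq 2\delta+1$ only for $g$ itself. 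For the powers you then invoke (``$g,g^2,\dots$ are infinitely many distinct such elements''), the only bound that follows directly is $d(z,g^kz)\leq k(2\delta+1)$, which degrades linearly in $k$; so for no fixed threshold $\epsilon$ do you actually exhibit more than $N(\epsilon)$ elements meeting it, and the contradiction does not yet follow. The justification you offer for transferring ``the same estimate'' to $g^k$ --- that $g^k$ also fixes $Y$ --- is insufficient: the parametrised fellow-travelling estimate for $g^k$ requires knowing that $\gamma$ and $g^k\gamma$ have a long overlap, and only the overlap of $\gamma$ with $g\gamma$ is given by the contradiction hypothesis. (Trying to evade this by enlarging the acylindricity threshold to $k(2\delta+1)$ is circular, since $R$ and $N$ themselves depend on the threshold.)

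The gap is fixable, and the fix is exactly how the paper's proof is organised. One proves a propagation statement quantified over \emph{all} powers of $g$ and all input displacements: for every $L\geq 0$ there is $D(L)$ such that if some $p\in\gamma$ is moved at most $L$ by a power of $g$, then the initial subsegment of $\gamma$ of length $d(p,Y)-D(L)$ is moved at most $300$ (a fixed constant) by that same power; this is your thin-quadrilateral argument, using that both endpoints of the relevant quadrilateral near $Y$ are at distance at most $1$ because $Y$ is a $g$-invariant clique. Then one iterates with bookkeeping: $g$ moves a long initial segment by at most $300$; hence $g^2$ moves it by at most $600$; propagation with $L=600$ restores the fixed bound $300$ for $g^2$ on a segment shorter by $D(600)$; and so on. If the overlap is longer than roughly $R+(N+1)D(600)$, this produces $N+1$ distinct elements (distinct because $g$ has infinite order) all moving the basepoint and a point at distance at least $R$ by at most $300$, contradicting acylindricity with threshold $300$. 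Without this per-power re-synchronisation --- and the accompanying observation that the usable segment shrinks by a controlled amount with each power, which is what forces the final bound $D$ to depend on the acylindricity constants --- the appeal to acylindricity does not close.
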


\begin{proof}
By hyperbolicity, for every $L\geq 0$ there exists $D\geq 0$ such that the following holds for $g,\gamma$ as in the statement: if $p\in\gamma$ is moved distance at most $L$ by a power of $g$, then $\gamma$ has an initial subgeodesic of length $d(p,Y)-D$ that gets moved distance at most $300$ by the same power of $g$. Therefore, if the lemma did not hold for a sufficiently large $D$, we would get a contradiction with acylindricity of the $MCG(\Sigma)$--action on $\mathcal C(\Sigma)$.
\end{proof}

\begin{lem}\label{lem:geodesic-translate}
Let the group $A$ act acylindrically on the hyperbolic space $X$. Then there exists a constant $D$ such that the following holds. Let $g\in A$ be loxodromic and let $Y$ be its quasi-axis as in Proposition \ref{prop:E}.\eqref{item:quasiline}. Then for any $x\in X$ and geodesic $\gamma=[x,\pi_Y(x)]$ we have that $N_{3\delta}(\gamma)\cap g\gamma$ has diameter at most $D$.
\end{lem}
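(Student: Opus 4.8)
The plan is to exploit the fact that $\gamma=[x,\pi_Y(x)]$ points ``straight out'' of $Y$, so that $\gamma$ and its translate $g\gamma$ can fellow-travel only while both remain close to $Y$; a long fellow-travelling segment would then force $g$ to nearly fix a point lying far from $Y$, contradicting that $g$ translates along $Y$ by a definite amount. I would first record two standard facts for the $Q$-quasiconvex set $Y$ of Proposition~\ref{prop:E}.\eqref{item:quasiline}, with constants depending only on $\delta$ and $Q$ (hence uniform in $g$): (a) parametrising $\gamma$ by arclength from $p_0:=\pi_Y(x)$, one has $d(\gamma(t),Y)=t\pm O(\delta,Q)$ and $\pi_Y(\gamma(t))\in N_{O(\delta,Q)}(p_0)$, since the geodesic from a point to its nearest-point projection projects back near that projection; and (b) the ``far projections'' estimate, that if $d(\pi_Y(w),\pi_Y(w'))$ exceeds a threshold $\Theta_0=\Theta_0(\delta,Q)$ then $d(w,w')\ge d(w,Y)+d(w',Y)+d(\pi_Y(w),\pi_Y(w'))-O(\delta,Q)$. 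Since $g\in E(g)$ stabilises $Y$ setwise, the projection $\pi_Y$ is $g$-equivariant (Subsection~\ref{subsec:elementary}) and $d(g\gamma(t),Y)=d(\gamma(t),Y)$, so $g\gamma$ satisfies the same height estimate (a).

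The fellow-travelling step comes next: if $z=g\gamma(t)\in N_{3\delta}(\gamma)\cap g\gamma$ is $3\delta$-close to $\gamma(s)$, comparing heights via (a) gives $|t-s|\le O(\delta,Q)$, whence $d(\gamma(t),g\gamma(t))\le |t-s|+3\delta\le C_0$ for a uniform $C_0$. Now suppose the intersection has diameter $D$, realised by two of its points; matching them to parameters $t_1<t_2$ of $\gamma$ shows $t_2\ge D-O(\delta)$ and produces a single point $b:=\gamma(t_2)$ with $d(b,gb)\le C_0$ and $d(b,Y)=t_2\pm O\ge D-O$. Thus it suffices to bound the height of a point that $g$ moves a bounded amount.

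The contradiction is obtained from translation length. I would use that an acylindrical action admits a uniform positive lower bound $\tau_0$ on the stable translation lengths of its loxodromic elements, so that $d(p_0,g^kp_0)\ge k\tau_0-O(\delta,Q)$ for all $k$. Choosing the uniformly bounded power $k=\lceil(\Theta_0+O)/\tau_0\rceil$ makes $d(\pi_Y(b),\pi_Y(g^kb))=d(\pi_Y(b),g^k\pi_Y(b))\ge\Theta_0$ (using equivariance of $\pi_Y$ and fact (a) applied to $b$ and $g^kb$), so fact (b) gives $d(b,g^kb)\ge 2\,d(b,Y)-O\ge 2D-O$. On the other hand $d(b,g^kb)\le k\,d(b,gb)\le kC_0$ is uniformly bounded. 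Combining the two estimates bounds $D$ by a constant depending only on $\delta$, $Q$ and $\tau_0$, i.e.\ only on the action, which is the desired uniform $D$.

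The main obstacle is precisely the regime of \emph{small} translation length: a single application of $g$ only forces $d(p_0,gp_0)\gtrsim\tau(g)$, which can be tiny, so the off-axis point $b$ could be moved a bounded amount with no contradiction. Passing to the bounded power $g^k$ resolves this, and it is exactly here that the uniform lower bound $\tau_0$ (a consequence of acylindricity) is indispensable: without it the required power $k$, and hence the final bound on $D$, would depend on $g$ rather than on the action alone.
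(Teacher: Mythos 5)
Your proof is correct, but it follows a genuinely different route from the paper's. The paper disposes of this lemma in one line by running the same argument as Lemma~\ref{lem:simplex_translate}, ``shortening $\gamma$ on both sides'': if $N_{3\delta}(\gamma)\cap g\gamma$ were large, a thin-quadrilateral argument shows that a long subsegment of $\gamma$ far from its endpoints is moved a uniformly bounded amount by powers of $g$, and since $g$ has infinite order this produces too many distinct elements coarsely fixing two far-apart points, contradicting the cardinality bound in the very definition of acylindricity. You instead combine standard nearest-point-projection estimates for the quasiconvex quasi-axis $Y$ (your facts (a) and (b)) with Bowditch's theorem that an acylindrical action admits a uniform positive lower bound $\tau_0$ on stable translation lengths of loxodromic elements --- this is Lemma~2.2 of the already-cited paper \cite{Bow:tight}, so it is a legitimate black box --- and then apply a \emph{single}, uniformly bounded power $g^k$ to force the far point $b$ to satisfy incompatible bounds $d(b,g^kb)\geq 2d(b,Y)-O(\delta,Q)$ and $d(b,g^kb)\leq kC_0$. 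Both arguments rest on acylindricity, but in different forms: the paper uses the definition directly, which keeps the argument self-contained and lets one treatment cover both the elliptic case (Lemma~\ref{lem:simplex_translate}) and the loxodromic case here; your argument imports a named consequence of acylindricity, but in exchange avoids the iterative shortening and any counting of group elements, yields an explicit bound $D=O(kC_0)$ with $k=O(\Theta_0/\tau_0)$ depending only on $\delta$, $Q$ and the acylindricity constants, and --- as you correctly isolate in your last paragraph --- pinpoints that it is the uniform bound $\tau_0$, not the translation length of $g$ itself, that makes $D$ independent of $g$. The only limitation is that your method is specific to loxodromic $g$ (it needs the quasi-axis and positive translation length), but that is exactly the setting of this lemma.
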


\begin{proof}
This is similar to Lemma~\ref{lem:simplex_translate}, except we "shorten" $\gamma$ on both sides.
\end{proof}



We are ready for:

\begin{proof}[Proof of Proposition \ref{prop:non-pA}]
Let $\Sigma,H,g$ be as in the statement and fix an $H$--orbit $H\cdot o$ in $\mathcal C(S)$.  Let $Q$ be the constant from Proposition~\ref{prop:E}.  For each nontrivial $h\in H$, let $Y_h$ be the $Q$--quasiconvex quasi-axis for $h$, from Proposition~\ref{prop:E}.\eqref{item:quasiline} (any nontrivial $h\in H$ is pseudo-Anosov). Let $Y_g\subset\mathcal C(\Sigma)$ be the following $Q$--quasiconvex subspace:
\begin{itemize}
    \item if $g$ is reducible, then $Y_g$ is a clique stabilised by $g$;
    \item if $g$ is pseudo-Anosov, then $Y_g$ is the quasi-axis from Proposition~\ref{prop:E}.\eqref{item:quasiline}.
\end{itemize}

Let $D$ be the maximum of the constants from Lemmas \ref{lem:simplex_translate} and \ref{lem:geodesic-translate}.  Let $C_0$ be such that $Y_h\subset N_{C_0}(H\cdot o)$ for all $h\in H$ (which exists since $Y_h$ and $H\cdot o$ are uniformly quasiconvex  and $Y_h$ lies in some neighborhood of $H\cdot o$).  By enlarging $Q$ once, we can assume $d(y',gy')\leq Q$ for all $y'\in Y_g$.

We will need the following claim:

\begin{claim}\label{claim:malnormality_consequence}
For any $L\geq 0$, the set $Y_g\cap N_{L}(H\cdot o)$ is bounded.  Moreover, there exists $C_1\geq 0$ such that $\pi_{Y_h}(gY_h)$ has diameter at most $C_1$ for all $h\in H$.
\end{claim}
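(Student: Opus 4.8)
The plan is to prove both assertions by the same mechanism: feed a long fellow-travelling configuration into Lemma~\ref{lem:??} and contradict malnormality. The basic reduction I would isolate first is that for any $x\in MCG(\Sigma)-H$ and any $C\geq 0$, the set $xH\cdot o\cap N_C(H\cdot o)$ is bounded. Indeed, if it had infinite diameter, then Lemma~\ref{lem:??} would give that $H\cap xHx^{-1}$ is infinite; but $x\notin H$ and $H$ is malnormal, so $H\cap xHx^{-1}$ is finite by malnormality and hence trivial since $H$ is torsion-free, a contradiction. Since $g\notin H$, I apply this with $x=g$, so $gH\cdot o\cap N_C(H\cdot o)$ is bounded for every $C$. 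Both parts of the claim will be arranged so that a failure produces exactly such an unbounded intersection for $x=g$.

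For the first assertion, if $g$ is reducible then $Y_g$ is a clique, hence has diameter at most $1$, and $Y_g\cap N_L(H\cdot o)$ is trivially bounded. So assume $g$ is pseudo-Anosov, so that $Y_g$ is a $Q$--quasiconvex quasi-axis on which $g$ acts by coarse translation, and suppose for contradiction that $Y_g\cap N_L(H\cdot o)$ has infinite diameter. Picking $z,z'\in Y_g\cap N_L(H\cdot o)$ with $d(z,z')$ arbitrarily large and using quasiconvexity of $H\cdot o$, I get that the whole subsegment $\sigma$ of $Y_g$ between $z$ and $z'$ lies in $N_{L'}(H\cdot o)$ for a uniform $L'=L'(L,Q,\delta)$; thus $Y_g$ contains subsegments $\sigma_n\subset N_{L'}(H\cdot o)$ of length tending to infinity. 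Since $g$ preserves $Y_g$ and translates it by a bounded amount, $g\sigma_n\subset Y_g$ overlaps $\sigma_n$ in a subsegment $\rho_n$ whose length still tends to infinity. Every point of $\rho_n$ lies within $L'$ of $H\cdot o$ (being in $\sigma_n$) and within $L'$ of $gH\cdot o$ (being in $g\sigma_n\subset N_{L'}(gH\cdot o)$); choosing for each such point a nearby point of $gH\cdot o$ then exhibits a subset of $gH\cdot o\cap N_{2L'}(H\cdot o)$ of diameter tending to infinity, contradicting the boundedness from the previous paragraph.

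For the second assertion I argue by contradiction: if no uniform bound $C_1$ exists, there are $h_n\in H$ with $\diam\pi_{Y_{h_n}}(gY_{h_n})\to\infty$. A large projection between the two $Q$--quasiconvex quasi-axes $Y_{h_n}$ and $gY_{h_n}=Y_{gh_ng^{-1}}$ forces them to fellow-travel, within a bounded distance $\nu=\nu(Q,\delta)$, along a segment of length tending to infinity. Since $Y_{h_n}\subset N_{C_0}(H\cdot o)$ and $gY_{h_n}\subset N_{C_0}(gH\cdot o)$, this fellow-travelling segment lies in $N_{C_0}(H\cdot o)\cap N_{C_0+\nu}(gH\cdot o)$, producing a subset of $gH\cdot o\cap N_C(H\cdot o)$ of diameter tending to infinity for $C=2C_0+\nu$, again contradicting the boundedness established for $x=g$. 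Hence the desired $C_1$ exists.

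I expect the main obstacle to be the uniformity of the quasiconvexity and projection estimates: I must ensure that ``unbounded intersection'' and ``large projection'' genuinely yield a single long connected segment contained in the intersection of \emph{fixed} neighbourhoods of $H\cdot o$ and $gH\cdot o$, with constants independent of $z,z'$ in the first part and of $h\in H$ in the second, so that in each case the resulting infinite-diameter set is of the form $gH\cdot o\cap N_C(H\cdot o)$ for a single $C$. The standard hyperbolic-space facts (quasiconvexity of orbits and axes, and the equivalence between large nearest-point projection and bounded-distance fellow-travelling) supply these, but verifying that the neighbourhood constants can be taken uniform in $h$ is the point requiring care.
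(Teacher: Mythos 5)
Your proposal is correct and follows essentially the same route as the paper: both assertions are reduced, via malnormality and Lemma~\ref{lem:??}, to the boundedness of $gH\cdot o\cap N_C(H\cdot o)$, with standard hyperbolic geometry (coarse translation along $Y_g$ for the first part, and the equivalence between large nearest-point projection and long fellow-travelling of quasiconvex sets for the second) supplying the transfer. The paper differs only cosmetically: it handles the reducible and pseudo-Anosov cases for $Y_g$ uniformly via the bound $d(y',gy')\leq Q$, and it sketches the thin-quadrilateral proof of the projection fact that you invoke as well known.
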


\begin{proof}
If $Y_g\cap N_{L}(H\cdot o)$ had unbounded diameter then $gH\cdot o$ would have unbounded intersection with the $2L+Q$--neighborhood of $H\cdot o$, since $Q\geq d(y',gy')$ for all $y'\in Y_g$. Lemma \ref{lem:??} then gives that $gH g^{-1}\cap H$ is infinite, contradicting malnormality of $H$.  

Similarly, malnormality of $H$ and Lemma~\ref{lem:??} imply that for all $L\geq 0$ there exists $B(L)$ such that $Y_h\cap N_{L}(gY_h)$ has diameter at most $B(L)$ (since $Y_h\subset N_{C_0}(H\cdot o)$ and $gY_h\subset N_{C_0}(gH\cdot o)$ and the lemma bounds the diameter of coarse intersections of the two coset orbits). It is well-known that quasiconvex sets in a hyperbolic space that have bounded intersection of neighborhoods also have bounded projection onto each other; we sketch this argument in our notation. 

Consider two points $a,b\in gY_h$ and $a'\in\pi_{Y_h}(a),b'\in\pi_{Y_h}(b)$. Any point on $[a',b']$ which is further away than $500+2Q$ from $\{a',b'\}$ cannot be $200$-close to $[a,a']$ or $[b,b']$, otherwise $a'$ would not be nearly closest to $a$ (that is, $a'\notin \pi_{Y_h}(a)$) or $b'$ would not be nearly closest to $b$. Hence, any such point is within $200$ of $[a,b]$ by thinness of quadrilaterals. Hence such point is within $200+Q$ from $gY_h$, as well as within $Q$ of $Y_h$. Hence, the diameter of $Y_h\cap N_{L}(gY_h)$ for $L=200+2Q$ is at least $d(a',b')-10^3-10Q$. Since $a',b'$ were arbitrary, the diameter of $\pi_{Y_h}(gY_h)$ is bounded in terms of $B(L)$.
\end{proof}

Finally, let $C_{Q,100}$ be the constant from Lemma~\ref{lem:gh}.  We can assume that $C_{Q,100}\geq 10D+10^3+3C_1$.

Fix $h\in H$.  First suppose that $d(Y_g,Y_h)>C_{Q,100}$.  Then Lemma \ref{lem:gh}.\eqref{item:far-apart} applies in view of Lemma \ref{lem:geodesic-translate} (if $g$ is pseudo-Anosov), or Lemma~\ref{lem:simplex_translate} (if $g$ is infinite-order reducible). Hence Lemma~\ref{lem:gh}.\eqref{item:far-apart} shows that $gh$ is pseudo-Anosov. 
    
Next, suppose that the translation length of $h$ satisfies $\tau_h>2Q+10C_{Q,100}$ and that $d(Y_g,Y_h)\leq C_{Q,100}$.  Recall that $diam(\pi_{Y_h}(gY_h))\leq C_1\leq  C_{Q,100}/3$.  Choose $y\in Y_h$ within distance $C_{Q,100}$ of some $y'\in Y_g$.
Hence $$2d(y,gy)+C_{Q,100}\leq 2d(y',gy')+5C_{Q,100}<\tau_h\leq d(y,hy).$$  So we can apply Lemma~\ref{lem:gh}.\eqref{item:long-translation} (with $g_1=h$ and $g_0=g$) and see that $gh$ is pseudo-Anosov.

To conclude, it suffices to prove the following:

\begin{claim}\label{claim:short}
There exist finitely many $h$ such that $d(Y_g,Y_h)\leq C_{Q,100}$ and $\tau_h\leq 2Q+10C_{Q,100}$.
\end{claim}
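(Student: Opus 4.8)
The plan is to show that the two stated conditions force every such $h$ to move a fixed basepoint a bounded distance, and then to convert this into finiteness using convex-cocompactness of $H$. First I would invoke the first assertion of Claim~\ref{claim:malnormality_consequence}: the set $R:=Y_g\cap N_{C_{Q,100}+C_0}(H\cdot o)$ is bounded. Fix a point $p\in R$ once and for all (if $R=\emptyset$ there is nothing to prove, since any $h$ with $d(Y_g,Y_h)\le C_{Q,100}$ will be seen below to place a point of $Y_g$ inside $R$).

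Next I would localise the quasi-axes. Suppose $h\in H$ satisfies $d(Y_g,Y_h)\le C_{Q,100}$ and $\tau_h\le 2Q+10C_{Q,100}$. Realising the distance between the two quasiconvex sets, choose $u\in Y_g$ and $v\in Y_h$ with $d(u,v)\le C_{Q,100}$. Since $Y_h\subseteq N_{C_0}(H\cdot o)$, the point $u$ lies in $N_{C_{Q,100}+C_0}(H\cdot o)$, hence $u\in R$ and $d(u,p)\le\diam(R)$. Therefore $v$, a point of the quasi-axis $Y_h$, lies within $D_0:=\diam(R)+C_{Q,100}$ of the fixed point $p$.

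Then I would use the translation-length bound to displace $p$ boundedly. Because $v$ lies on the $Q$-quasiline $Y_h$ on which $h$ acts loxodromically, the pointwise displacement $d(v,hv)$ is bounded in terms of $\tau_h$, $Q$ and $\delta=100$; as $\tau_h\le 2Q+10C_{Q,100}$ this yields a uniform bound $M$. Hence
$$d(p,hp)\le d(p,v)+d(v,hv)+d(hv,hp)=2d(p,v)+d(v,hv)\le 2D_0+M=:M',$$
so every admissible $h$ moves $p$ a distance at most $M'$. Finally, pick $h_p\in H$ with $d(p,h_p\cdot o)\le C_{Q,100}+C_0$ (possible since $p\in N_{C_{Q,100}+C_0}(H\cdot o)$). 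For each admissible $h$,
$$d(o,h_p^{-1}hh_p\cdot o)=d(h_p\cdot o,\,hh_p\cdot o)\le 2d(p,h_p\cdot o)+d(p,hp)\le 2(C_{Q,100}+C_0)+M'.$$
Convex-cocompactness means the orbit map $k\mapsto k\cdot o$ is a quasi-isometric embedding of $H$ into $\mathcal C(\Sigma)$, so $|h_p^{-1}hh_p|_H$ is bounded independently of $h$; since $H$ is finitely generated, $h_p^{-1}hh_p$ ranges over a finite set, and $h_p$ is fixed, whence $h$ ranges over a finite set. This proves the claim.

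The main obstacle is the middle step: making precise that a point on the $h$-quasi-axis is displaced by an amount controlled by the stable translation length $\tau_h$, and, more conceptually, recognising that it suffices to displace a \emph{single} basepoint boundedly. The ambient $MCG(\Sigma)$-action on $\mathcal C(\Sigma)$ is only acylindrical, not proper, so one cannot directly conclude finiteness from bounded displacement of one point; it is precisely the convex-cocompactness of $H$ (equivalently, the quasi-isometrically embedded orbit $H\cdot o$) that upgrades this to genuine properness for the restricted $H$-action and delivers the required finiteness.
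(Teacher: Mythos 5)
Your proof is correct and follows essentially the same route as the paper: localise $Y_h$ near the basepoint using the first part of Claim~\ref{claim:malnormality_consequence}, convert the translation-length bound into a bounded displacement of a fixed point, and then use convex-cocompactness (the orbit map being a quasi-isometric embedding into $\mathcal C(\Sigma)$, plus finite generation of $H$) to conclude finiteness. The only cosmetic difference is that you run the displacement estimate directly in $\mathcal C(\Sigma)$ and finish with a conjugation trick, whereas the paper transfers quasi-axes and translation lengths to a Cayley graph of the hyperbolic group $H$ and concludes there; both versions rest on the same standard fact that the displacement of a point on a $Q$-quasi-axis is controlled by $\tau_h$, $Q$ and $\delta$, a point about which the paper is equally brief.
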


\begin{proof}
By Claim~\ref{claim:malnormality_consequence} (which says that $Y_g$ can only come close to $H\cdot o$ at a bounded set) and since $Y_h$ lies within $C_0$ of $H\cdot o$, we can only have $d(Y_g,Y_h)\leq C_{Q,100}$ if $d(Y_h,o)\leq C'$ for some $C'$ (depending on $g$ also).

Since $H$ is quasi-isometrically embedded in $\mathcal C(\Sigma)$, it suffices to show that in (a fixed Cayley graph of) a hyperbolic group $G$, for any $B\geq 0$ there are only finitely many $h\in G$ with translation distance bounded by $B$ and such that $Y_h$ contains a point within $B$ of the identity (note that quasi-axes in $G$ get mapped within bounded distance of quasi-axes in $\mathcal C(\Sigma)$, and translation lengths get distorted in a way controlled by the quasi-isometry constants). This is true since any such $h$ moves the identity a uniformly bounded amount.
\end{proof}

We have shown that for all $h\in H$, either $gh$ is pseudo-Anosov or $h$ is one of the finitely many elements of $H$ from Claim~\ref{claim:short}, which proves the proposition.
%
%
%
\end{proof}

\subsection{Pseudo-Anosovs without hidden symmetries}\label{subsec:no_hidden_symmetries}

\begin{lem}\label{lem:no_hidden_sym}
Let $\Sigma$ be a finite-type hyperbolic surface with genus $\mathfrak g$ and $\mathfrak p$ punctures, with $(\mathfrak g,\mathfrak p)\notin \{(1,1),(1,2),(2,0)\}$. Then there exists a pseudo-Anosov $\phi\in MCG(\Sigma)$ and $n_0\geq 0$ such that the following holds. For any curve $c$ on $\Sigma$ and $n\geq n_0$ we have $\stab(c)\cap \stab(\phi^n(c))=\{1\}$.
\end{lem}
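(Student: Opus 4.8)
The plan is to produce a single pseudo-Anosov $\phi$ whose invariant laminations admit no symmetries, i.e.\ with $E(\phi)=\langle\phi\rangle$, and then to show that for all large $n$ any element fixing both $c$ and $\phi^n(c)$ is forced into $E(\phi)$ and hence is trivial. I expect the existence of such a $\phi$ to be the main obstacle, and it is exactly where the excluded surfaces enter: for $(\mathfrak g,\mathfrak p)\notin\{(1,1),(1,2),(2,0)\}$ the group $MCG(\Sigma)$ has trivial centre and, correspondingly, trivial finite radical, and an acylindrically hyperbolic group with trivial finite radical contains a loxodromic element (here a pseudo-Anosov $\phi$) whose maximal elementary subgroup $E(\phi)$ from Proposition~\ref{prop:E}\eqref{item:quasiline} equals $\langle\phi\rangle$. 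In particular $E(\phi)$ is torsion-free, and since every nontrivial power of $\phi$ is pseudo-Anosov and fixes no curve, the only curve-fixing element of $E(\phi)$ is the identity.

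I would fix such a $\phi$, let $Y=Y_\phi$ be its $Q$--quasiconvex quasi-axis, and let $\tau>0$ be its stable translation length on $\mathcal C(\Sigma)$. Since $d(x,\phi^n x)\geq n\tau$ for every vertex $x$ and every $n\geq 1$, and since $\pi_Y$ is $\langle\phi\rangle$--equivariant, the projections $\pi_Y(c)$ and $\pi_Y(\phi^n c)=\phi^n\pi_Y(c)$ are at distance at least $n\tau$ along $Y$; crucially this bound is uniform in $c$. By quasiconvexity of $Y$ and hyperbolicity, any geodesic $[c,\phi^n c]$ then contains a subsegment $\sigma$ of length at least $n\tau-C$ that $K$--fellow-travels $Y$, for constants $C,K$ depending only on $\delta$ and $Q$.

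Now suppose $1\neq f\in\stab(c)\cap\stab(\phi^n(c))$. Then $f$ fixes the vertices $c$ and $\phi^n(c)$, so $f\cdot[c,\phi^n c]$ is a geodesic with the same endpoints and hence $2\delta$--fellow-travels $[c,\phi^n c]$. As $f$ is an isometry, $f(w)$ lies at distance $d(c,w)$ from $c$ along this fellow-travelling geodesic, so $f$ moves every $w\in[c,\phi^n c]$ by a bounded amount $D'=D'(\delta)$. Choosing $w_1,w_2\in\sigma$ near its two ends and setting $x=\pi_Y(w_1)$, $y=\pi_Y(w_2)\in Y$, the triangle inequality gives $d(x,fx),\,d(y,fy)\leq 2K+D'=:D_0$, while $d(x,y)\geq n\tau-C-2K$. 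Applying Proposition~\ref{prop:E}\eqref{item:coarse_stab} with $D=D_0$ furnishes a threshold $L=L(D_0,\phi)$, and taking $n_0$ so large that $n_0\tau-C-2K\geq L$ (a choice independent of $c$ and of $f$) forces $f\in E(\phi)=\langle\phi\rangle$ for all $n\geq n_0$. But $f(c)=c$ and no nontrivial power of $\phi$ fixes a curve, so $f=1$, a contradiction. Hence $\stab(c)\cap\stab(\phi^n(c))=\{1\}$ for all curves $c$ and all $n\geq n_0$, as required. The only genuinely delicate input is the selection of $\phi$ with $E(\phi)=\langle\phi\rangle$; everything after that is a uniform fellow-travelling estimate fed into Proposition~\ref{prop:E}\eqref{item:coarse_stab}.
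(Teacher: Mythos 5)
Your proposal is correct and follows essentially the same route as the paper: both select a pseudo-Anosov $\phi$ with $E(\phi)=\langle\phi\rangle$ (using triviality of the centre, hence of the maximal finite normal subgroup, for the non-excluded $(\mathfrak g,\mathfrak p)$, together with the DGO-type existence result), and both force any element of $\stab(c)\cap\stab(\phi^n(c))$ into $E(\phi)$ by showing it coarsely stabilises two far-apart closest-point projections to the quasi-axis and then invoking Proposition~\ref{prop:E}\eqref{item:coarse_stab}, concluding since no nontrivial power of $\phi$ fixes a curve. Your write-up simply makes explicit the uniform fellow-travelling estimates that the paper's proof leaves implicit in the phrase ``coarsely stabilises the closest-point projections.''
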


\begin{proof}
Proposition \ref{prop:E}.\eqref{item:coarse_stab} yields, for a pseudo-Anosov $g$, a unique maximal elementary subgroup $E(g)$ containing $g$ and an invariant quasi-geodesic $\gamma_g$ in the curve graph $\mathcal C(\Sigma)$, with the following property. For every $D\geq 0$ there exists $L\geq 0$ such that if $h\in G$ has the property that there exist $x,y\in \gamma_g$ with $d_{\mathcal C(\Sigma)}(x,y)\geq L$ and $d_{\mathcal C(\Sigma)}(x,h(x)),d_{\mathcal C(\Sigma)}(y,h(y))\leq C$, then $h\in E(g)$.

Let $\phi$ be a pseudo-Anosov for which $E(\phi)=\langle \phi\rangle$.  Such a $\phi$ exists by \cite[Theorem 6.14]{DGO}, since the maximal finite normal subgroup of a mapping class group coincides with its centre (see the proof of \cite[Proposition 11.15]{MT:Cremona}) and the centre is trivial under our assumptions on $(\mathfrak g,\mathfrak p)$ (see \cite[Section 3.4]{FarbMargalit}).

Given any curve $c$ and any sufficiently large $n$, if $h\in \stab(c)\cap \stab(\phi^n(c))$, then $h$ coarsely stabilises the closest-point projections of $c$ and $\phi^n(c)$ to $\gamma_\phi$, so that $h\in E(\phi)=\langle \phi\rangle$ by the above. No non-trivial power of $\phi$ fixes a curve, so $h=1$.
\end{proof}


\section{Common witnesses for non-inner automorphisms}

\begin{prop}\label{prop:not-conjugate}
Let $G$ be the fundamental group of a closed connected orientable surface of finite genus, and let $\phi_1,\dots,\phi_n$ be non-inner automorphisms of $G$. Then there exists $x\in G$ such that for all $i$, the elements $\phi_i(x)$ and $x$ are not conjugate.
\end{prop}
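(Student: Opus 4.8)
The plan is to pass from automorphisms of $G=\pi_1(\Sigma)$ to mapping classes and to detect non-conjugacy geometrically, on Bonahon's space of geodesic currents. I focus on the essential case $\mathfrak g\ge 2$, where $G$ is hyperbolic; for $\mathfrak g\le 1$ the statement is elementary (for $\mathfrak g=0$ it is vacuous, and for $\mathfrak g=1$ one has $G\cong\Z^2$, where ``non-inner'' means nontrivial, conjugacy means equality, and $\Z^2$ is not a finite union of the proper subgroups $\ker(\phi_i-\mathrm{id})$).

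By Dehn--Nielsen--Baer, $\Out(G)\cong MCG^{\pm}(\Sigma)$, so each non-inner $\phi_i$ determines a nontrivial mapping class $f_i$, and since they differ by an inner automorphism, $\phi_i$ and $f_i$ induce the same permutation of conjugacy classes. Thus $\phi_i(x)$ is conjugate to $x$ iff $f_i$ fixes the oriented free homotopy class of the closed geodesic representing $x$. I would then work in the space $\mathcal C(\Sigma)$ of geodesic currents: the $f_i$ act linearly and homeomorphically on it; conjugacy classes of nontrivial elements correspond to (unoriented, weighted) closed geodesics, i.e.\ rational currents; by Bonahon these are dense and $\mathcal C(\Sigma)$ is a Baire space. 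Set $\mathrm{Fix}_i=\{c:(f_i)_*c=c\}$, a closed linear subspace.

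The key dichotomy is whether $(f_i)_*$ moves some current. If it does, then $\mathrm{Fix}_i$ is a \emph{proper} closed subspace of the infinite-dimensional $\mathcal C(\Sigma)$, hence nowhere dense --- and this is exactly where currents pay off, as nowhere-density is immediate from linearity rather than needing Smith theory on $\mathcal{PML}$. If instead $(f_i)_*$ fixes every current, then $f_i$ sends each $x$ to a conjugate of $x^{\pm1}$; a homomorphism of $H_1(\Sigma;\Q)$ sending every vector to $\pm$ itself must be $\pm\mathrm{id}$, and the $+\mathrm{id}$ case would force $f_i$ to fix every \emph{oriented} non-separating simple closed curve, hence to be trivial (the kernel of the $MCG^\pm$--action on isotopy classes of curves is the centre, whose only nontrivial element, the genus-two hyperelliptic involution, acts as $-\mathrm{id}$ on $H_1$ and is thus excluded once orientations are remembered). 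So a current-trivial $f_i$ acts as $-\mathrm{id}$ on $H_1(\Sigma;\Q)$, and therefore moves the conjugacy class of \emph{any} homologically nontrivial element.

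Finally I would assemble the two cases. The homologically nontrivial closed geodesics are dense in $\mathcal C(\Sigma)$, and the complement of $\bigcup_{i:\,f_i\text{ moves a current}}\mathrm{Fix}_i$ is dense open by Baire; a dense set meets this nonempty open set, so I may choose a homologically nontrivial closed geodesic $\gamma$ in it, with corresponding $x\in G$. For every $i$ with $f_i$ current-nontrivial, the class of $x$ is moved because its current is (a coarser, unoriented invariant), while for every current-trivial $f_i$ it is moved because $x$ is homologically nontrivial; hence $\phi_i(x)$ is not conjugate to $x$ for all $i$. The main obstacle I expect is the kernel computation of the third paragraph --- pinning down exactly which mapping classes are invisible to unoriented currents and verifying they are caught by homology --- together with marshalling the needed facts about geodesic currents (density of rational, and of homologically nontrivial, currents, and the Baire property of $\mathcal C(\Sigma)$).
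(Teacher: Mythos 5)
Your approach is correct in outline, but it is genuinely different from the paper's. The paper never leaves the group $G$: it proves a variant of a lemma of Minasyan--Osin for torsion-free hyperbolic groups (Lemma~\ref{lem:cyclicperm}), stating that if $g_1^{n_1}\cdots g_l^{n_l}$ is conjugate to $h_1^{n_1}\cdots h_l^{n_l}$ with all $n_i$ large, then $h_i=xg_{i+k}x^{-1}$ for a single $x$ and a cyclic shift $k$. It then takes the $g_i$ to be the standard generators, $h_i=\phi(g_i)$, exponents $T,T+1,\dots,T+l-1$, kills the shift $k$ with a homomorphism to $\Z$ sending $g_1\mapsto 1$ and $g_j\mapsto 0$, and concludes that $\phi$ would be conjugation by $x$, a contradiction; the witness is the explicit element $x=g_1^{T}g_2^{T+1}\cdots g_l^{T+l-1}$ (small cancellation/Greendlinger is used to verify the ``no relation $g_1^{m_1}\cdots g_l^{m_l}=1$'' hypothesis). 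Your proof instead passes to $MCG^{\pm}(\Sigma)$ via Dehn--Nielsen--Baer and runs a soft category argument on the space of geodesic currents. What the paper's route buys is self-containedness (one citable hyperbolic-groups lemma plus elementary computations) and an explicit witness; what yours buys is conceptual transparency and no case-by-case bookkeeping with exponents, at the cost of heavier background. Your cone-linearity argument for nowhere-density of $\mathrm{Fix}_i$ does work: if $\mathrm{Fix}_i$ contained an open set around $c_0$, then $c_0+\epsilon c\in\mathrm{Fix}_i$ for every current $c$ and small $\epsilon$, and additivity of $(f_i)_*$ forces $(f_i)_*c=c$.

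Two places where real work remains, beyond ``marshalling known facts''. First, density in the space of currents of multiples of \emph{homologically nontrivial} closed geodesics is not a statement you can cite: Bonahon gives density of rational currents, and the refinement needs an argument, e.g.\ proving $\frac{1}{n}c_{[g^nh]}\to c_{[g]}$ so that a null-homologous $g$ can be replaced by the homologically nontrivial $g^nh$ without leaving a given open set. Note that by your own kernel computation this is only ever needed when some $f_i$ is current-trivial, i.e.\ fixes every unoriented curve class, i.e.\ lies in the kernel of the action on the curve graph; so it only matters when $\mathfrak g=2$ and $f_i$ is the hyperelliptic involution, and you could present the second branch of your dichotomy that way. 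Second, two small repairs: the Baire property is never needed, since with finitely many $\phi_i$ a finite intersection of dense open sets is dense in any topological space; and in genus $1$ the correct justification is that the fixed subgroup of a nontrivial element of $GL_2(\Z)$ has rank at most $1$ (so finitely many such subgroups cannot cover $\Z^2$) --- mere properness is not enough, as $\Z^2$ \emph{is} the union of its three index-$2$ subgroups.
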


We need the following version of \cite[Lemma 4.4]{MinasyanOsin:normal} (which we use in the proof).

\begin{lem}\label{lem:cyclicperm}
Let $G$ be a torsion-free hyperbolic group. Let $l\geq 2$ and let $\{g_1,\dots, g_l\}$, $\{h_1,\dots, h_l\}$ be sets of non-commensurable, primitive, infinite order elements. Assume that if $g_1^{m_1}\dots g_l^{m_l}$ is the identity of $G$ for some $m_i$ then all $m_i$ are $0$.

There exists  $N \in \mathbb N$ such that the following holds. Suppose that
$$g=g_1^{n_1}g_2^{n_2}\dots g_l^{n_l}$$ is conjugate to $$h=h_1^{n_1} \cdots h_l^{n_l}$$ in $G$, and for all $1\leq i\leq l$ we have $n_i\geq N$.  Then there exists $x\in G$ and $k\in\mathbb N$ with $h_i=x g_{i+k}x^{-1}$ for all $i$ (indices mod $l$).
\end{lem}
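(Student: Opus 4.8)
The plan is to represent the conjugate elements $g$ and $h$ by periodic quasi-geodesic axes whose period is a concatenation of long segments fellow-travelling the axes of the $g_i$ (respectively the $h_i$), and then to read off the cyclic matching of these segments from the conjugacy.

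First I would fix a $\delta$-hyperbolic Cayley graph $X$ of $G$, a basepoint $o$, and for each $i$ let $A_{g_i}, A_{h_i}$ be the quasi-axes of the primitive loxodromic elements $g_i, h_i$ from Proposition~\ref{prop:E}.\eqref{item:quasiline}. Since the $g_i$ are pairwise non-commensurable, distinct translates of the $A_{g_i}$ have uniformly bounded coarse intersection (equivalently, by Proposition~\ref{prop:E}.\eqref{item:coarse_stab}, a long fellow-travelling would force commensurability), and likewise for the $h_i$; call the bound $B$. I would then show that for $N$ large, the bi-infinite path obtained by concatenating, over all periods, the geodesics joining the consecutive points $g_1^{n_1}\cdots g_{i-1}^{n_{i-1}}o$ and $g_1^{n_1}\cdots g_i^{n_i}o$ is a quasi-geodesic. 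This is precisely the situation of Lemma~\ref{lem:concat}: each long segment following a translate of $A_{g_i}$ has length at least $cN$ for some $c>0$ (about $n_i$ times the translation length $\tau(g_i)$), while all the overlap terms $d_{i,i\pm1},d_{i,i\pm2}$ are bounded independently of $N$ by non-commensurability. Here the hypothesis that $g_1^{m_1}\cdots g_l^{m_l}=1$ forces all $m_i=0$ is what guarantees that the period genuinely turns through all $l$ axes and the concatenation does not degenerate. Thus $g$ is loxodromic with a periodic quasi-axis $L_g$ decomposed into long segments near the $A_{g_i}$ separated by bounded transitions; symmetrically $h$ has quasi-axis $L_h$ decomposed into long segments near the $A_{h_i}$.

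Next, writing $h=tgt^{-1}$, I would use that $tL_g$ and $L_h$ are both quasi-axes of $h$, hence lie within bounded Hausdorff distance $R$ of one another by stability of quasi-geodesics, and that $\tau(g)=\tau(h)$. Transporting the decomposition of $L_g$ by $t$, the single quasi-geodesic $L_h$ then carries two decompositions into long segments: the intrinsic one, segment $i$ fellow-travelling $A_{h_i}$, and the transported one, segment $j$ fellow-travelling $tA_{g_j}$. The heart of the argument, and the step I expect to be the main obstacle, is to show these two decompositions coincide up to a cyclic shift. For $N$ large enough that $cN>10(R+B)$, a long segment of one decomposition cannot straddle two distinct long segments of the other, since a bounded transition region cannot absorb a length-$cN$ segment; so each long segment near $tA_{g_j}$ overlaps exactly one long segment near $A_{h_i}$ along a sub-segment of length at least $cN-10(R+B)$. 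Because both decompositions are linearly ordered along $L_h$ with the same period $\tau(h)$ and bounded gaps, this pairing is order-preserving and periodic, hence given by a single cyclic shift $k$ with $A_{h_{j+k}}$ paired to $tA_{g_j}$ for every $j$. I would argue carefully here that no long segment is skipped and that $k$ is globally constant, using only that the segment lengths ($\sim cN$) dominate all error terms ($R$, $B$, and the maximal transition length), which are independent of $N$.

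Finally, for each $j$ the quasi-axes $tA_{g_j}$ and $A_{h_{j+k}}$ fellow-travel along a segment of length at least $cN\geq L$, where $L$ is the constant from Proposition~\ref{prop:E}.\eqref{item:coarse_stab}. Choosing $a,b$ so that $a\,\tau(tg_jt^{-1})\approx b\,\tau(h_{j+k})$, the element $(tg_jt^{-1})^{a}h_{j+k}^{-b}$ coarsely fixes two far-apart points of $A_{h_{j+k}}$, so that item yields $(tg_jt^{-1})^{a}\in E(h_{j+k})$; since $G$ is torsion-free, $E(h_{j+k})$ is infinite cyclic, and commutation forces $tg_jt^{-1}\in E(h_{j+k})$, i.e.\ $tg_jt^{-1}$ and $h_{j+k}$ are commensurable. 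As both are primitive they each generate $E(h_{j+k})$, and as both are traversed positively by the $h$-translation along $L_h$ they are co-oriented, so $tg_jt^{-1}=h_{j+k}$. Setting $x=t$ and re-indexing gives $h_i=xg_{i+k}x^{-1}$ for all $i$, as required.
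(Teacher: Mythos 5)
There is a genuine gap, and it sits at the crux of the lemma. Your two decompositions of $L_h$ are misdescribed: the $i$-th long segment of $L_h$ does not fellow-travel $A_{h_i}$ but the translate $v_iA_{h_i}$, where $v_i=h_1^{n_1}\cdots h_{i-1}^{n_{i-1}}$, and likewise the transported segments fellow-travel $tu_jA_{g_j}$, where $u_j=g_1^{n_1}\cdots g_{j-1}^{n_{j-1}}$. Consequently, what your matching argument plus Proposition~\ref{prop:E}.\eqref{item:coarse_stab} (with primitivity, torsion-freeness and co-orientation) actually yields is
$$v_{j+k}\,h_{j+k}\,v_{j+k}^{-1}=(tu_j)\,g_j\,(tu_j)^{-1},\qquad \text{i.e.}\qquad h_{j+k}=x_j\,g_j\,x_j^{-1}\ \text{ with }\ x_j=v_{j+k}^{-1}tu_j,$$
a conjugator that depends on $j$. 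The lemma demands a \emph{single} $x$ working for all $i$ simultaneously --- this is exactly what Proposition~\ref{prop:not-conjugate} needs in order to conclude that the automorphism is inner --- and passing from the $j$-dependent $x_j$ to one $x$ is a real step, not a re-indexing: one computes that $x_j^{-1}x_{j+1}$ is an explicit power of $g_j$ (here one uses $E(g_j)=\langle g_j\rangle$), goes around the cycle to produce a relation of the form $g_1^{m_1}\cdots g_l^{m_l}=1$, and then invokes the hypothesis that such a relation forces all $m_i=0$, whence all the $x_j$ coincide. This is precisely the final step of the paper's proof (carried out there after reducing to \cite[Lemma 4.4]{MinasyanOsin:normal} and adjusting the $x_i$ by powers of the $g_{\sigma(i)}$). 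Your proof never performs it: you simply write ``Setting $x=t$'', but $t$ alone does not conjugate $g_j$ to $h_{j+k}$; only $v_{j+k}^{-1}tu_j$ does.

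Relatedly, you have misattributed the role of the one hypothesis designed for that step. The assumption that $g_1^{m_1}\cdots g_l^{m_l}=1$ forces all $m_i=0$ is not what keeps the concatenated path from degenerating: pairwise non-commensurability already gives the uniform bounds on $d_{i,i\pm1},d_{i,i\pm2}$ needed for Lemma~\ref{lem:concat} (with primitivity and torsion-freeness handling the case $l=2$, where pieces two apart follow distinct translates of the same axis). The relation hypothesis is needed only at the end, to identify the conjugators --- and that is the place your argument omits it. The rest of your outline (periodic quasi-axes built from pieces, fellow-travelling of the two $h$-axes, cyclic matching of long segments, and promotion of fellow-travelling to commensurability and then equality via Proposition~\ref{prop:E}.\eqref{item:coarse_stab}) is sound and runs parallel to the paper's argument, which delegates the cyclic-shift and conjugator bookkeeping to \cite[Lemma 4.4]{MinasyanOsin:normal} rather than doing the matching by hand; but as written your proof establishes a strictly weaker statement than the lemma.
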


\begin{proof}
We conflate $G$ with a fixed Cayley graph.


There exists a constant $C_0$ such that for all sufficiently large $N$ we have that $g$ stabilises a $(C_0,C_0)$--quasigeodesic $\alpha_g$ obtained by concatenating translates of the discrete paths $\{1,g_i,g_i^2,\dots, g_i^{n_i}\}$, with $n_i\geq N$, that we call \emph{pieces}, and similarly for $h$. This is because the $\langle g_i\rangle$ are quasi-geodesic lines, and moreover any two translates of two distinct such quasi-geodesic lines cannot stay within bounded distance of each other for arbitrarily long time in view of Proposition \ref{prop:E}.\eqref{item:coarse_stab} ($g_i$ being not commensurable to $g_j$ is equivalent to $g_i\notin E(g_j)$).

Let $D\geq 0$ be such that any two $(C_0,C_0)$--quasigeodesics in $G$ with the same endpoints at infinity lie within distance $D$ of each other.

If $g$ and $h$ are conjugate, we have that two translates of $\alpha_g$ and $\alpha_h$ have the same endpoints at infinity. Hence, for all $M$ the following holds for all large $N$: each piece for $h$ has a subpath of length at least $M$ that has a translate that lies within $D$ of a piece for $\alpha_g$.  See Figure \ref{fig:pieces}.
\medskip

\begin{figure}[h]
\begin{overpic}[width=\textwidth]{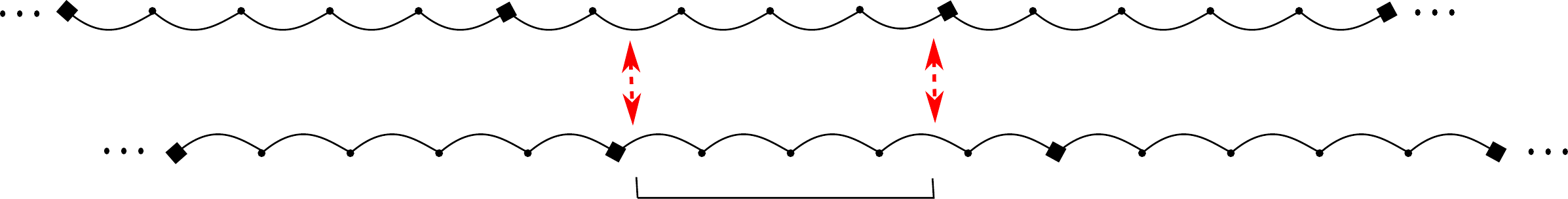}
\put(62,6){$D$}
\put(36,6){$D$}
\put(49,-2){$M$}
\end{overpic}
\caption{Edges in the bottom axis $\alpha_h$ are labelled by the $h_i$, with a translate of the piece $\{1,h_i,\ldots,h_i^5\}$ shown in the middle.  This has a long subpath $D$--close to a subpath of the piece $\{1,g_{\sigma(i)},\ldots,g_{\sigma(i)}^5\}$ of the translate of $\alpha_g$ shown at the top. (Large dots represent concatenation points of successive pieces.)}\label{fig:pieces}
\end{figure}

For $M$ large enough, this implies that each $h_i\in x_i E(g_{\sigma(i)})x^{-1}_i$ for some $\sigma(i)\in\{1,\ldots,l\}$ and some $x_i\in G$ by Proposition \ref{prop:E}.\eqref{item:coarse_stab}. Note that $i\mapsto \sigma(i)$ is injective since $\{h_1,\ldots,h_l\}$ are pairwise non-commensurable.

To sum up, either there is no bijection $\sigma$ and elements $x_i$ such that $h_i$ lies in $x_i E(g_{\sigma(i)})x^{-1}_i$, in which case $g$ and $h$ cannot be conjugate for any large $N$ and the conclusion holds vacuously, or there exist $\sigma$ and $x_i$, in which case we can fix them and they only depend on the choice of $g_i$ and $h_i$.

We now fix $\sigma$ and $x_i$ as above. We further let $F$ be the (finite) collection of all elements of the form $x_i^{-1}x_{i+1}$ (indices mod $l$). Therefore, $h$ (whence $g$) is conjugate to $(h'_1)^{n_1}f_1 \cdots (h'_l)^{n_l}f_l$, for some $h'_i\in E(g_{\sigma(i)})$, where $f_i=x_i^{-1}x_{i+1}$ (indices mod $l$). We have now reduced to the setup of \cite[Lemma 4.4]{MinasyanOsin:normal}, from which we can conclude that $\sigma$ is a cyclic shift, say $\sigma(i)=i+k$ for some fixed $k$, and $f_i\in E(g_{\sigma(i)})E(g_{\sigma(i+1)})$.

Now, we have $E(g_j)=\langle g_j\rangle$ for all $j$ by the primitivity hypothesis and the assumption that $G$ is torsion-free, so $f_i=g_{\sigma(i)}^{l_i}g_{\sigma(i+1)}^{m_i}$ for some $l_i,m_i$. Up to multiplying each $x_i$ on the right by $g_{\sigma(i)}^{l_i}$ (which does not affect the equation $h_i=x_ih'_ix_i^{-1}$), we then get $x_{i+1}=x_ig_{\sigma(i+1)}^{m'_i}$, where $m'_i=m_i+l_{i+1}$. This holds with indices mod $l$, so that $x_1=x_1g_{\sigma(2)}^{m'_1}\dots g_{\sigma(l)}^{m'_{l-1}}g_{\sigma(1)}^{m'_l}$. By our hypothesis on the $g_i$ (applied to a cyclic conjugate) we get that all $m_i$ are $0$, and therefore we get that all $x_i$ are equal, giving us the required $x$.
\end{proof}

With the lemma in hand, we proceed to:

\begin{proof}[Proof of Proposition \ref{prop:not-conjugate}]
Fix some $\phi=\phi_i$, and let $g_1,\dots,g_l$ be the standard generators of the surface group $G$. Observe that the condition from Lemma \ref{lem:cyclicperm} about products of the form $g_1^{m_1}\dots g_l^{m_l}$ holds in this setup. Indeed, the $g_i$ are generators in a $C'(1/6)$ small cancellation presentation where the (only) relator $R$, as a cyclic word, has the following property. Any subword of a cyclic permutation of $R$ which has length more than half the length of $R$ is not a subword of a cyclic permutation of a product $g_1^{m_1}\dots g_l^{m_l}$. We can conclude in view of Greendlinger's lemma.

Let $h_j=\phi(g_j)$. Consider the elements $g=g_1^{T}g_2^{T+1}\dots g_l^{T+l-1}$ and $h=h_1^{T}h_2^{T+1}\dots h_l^{T+l-1}$, for $T$ larger than the $N$ from Lemma \ref{lem:cyclicperm}. We claim that $g$ and $h$ cannot be conjugate, which implies that we can take $x=g$ for some sufficiently large $T$ which works simultaneously for all $\phi_i$.

If $g$ and $h$ were conjugate, then we would have $h_i=xg_{i+k}x^{-1}$ for some fixed $x\in G$ and natural number $k<l$. We claim that $k=0$. This holds since there is a homomorphism $\psi$ to $\mathbb Z$ such that $\psi(g_1)=1$ and $\psi(g_j)=0$ for all $j\neq 1$, so that $\psi(g)=T$ and $\psi(h)=T+k$, so $k=0$. Hence $\phi(g_i)$ acts as conjugation by $x$ on all generators, implying that it is inner, contradicting the hypotheses.
\end{proof}

Now we can prove the separability criterion.

\begin{proof}[Proof of Theorem \ref{thm:criterion}]
Throughout the proof we think of the mapping class group as an index 2 subgroup of $\Out(\pi_1(\Sigma_{\mathfrak g}))$.  Let $\Psi:\Aut(\pi_1(\Sigma_{\mathfrak g}))\to \Out(\pi_1(\Sigma_{\mathfrak g}))$ be as in Section~\ref{subsec:grossman}.

Since $MCG(\Sigma_{\mathfrak g})$ is residually finite \cite{Grossman} and has finitely many conjugacy classes of finite-order elements \cite{Bridson}, there is a finite-index torsion-free subgroup $M_0\leq MCG(\Sigma_{\mathfrak g})$.

\begin{claim}\label{claim:inf_order_separate}
Suppose that $\phi\in \Aut(\pi_1(\Sigma_{\mathfrak g}))$ satisfies $\bar\phi=\Psi(\phi)\in M_0-(H\cap M_0)$.  Then there is a finite group $F$ and a quotient $q:\Out(\pi_1(\Sigma_{\mathfrak g}))\to F$ such that $q(\bar \phi)\notin q(H)$.  
\end{claim}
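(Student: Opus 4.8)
The plan is to verify the two hypotheses of Proposition~\ref{prop:super_grossman} for the finitely generated, trivial-centre group $G=\pi_1(\Sigma_{\mathfrak g})$, with $H<\Out(G)$ (recall $MCG(\Sigma_{\mathfrak g})$ is identified with an index-$2$ subgroup of $\Out(G)$), with $\Gamma_H=\Psi^{-1}(H)$, and with the given $\phi$. Hypothesis~\eqref{item:conj_sep}, conjugacy separability of $\Gamma_H$, is exactly the standing assumption of Theorem~\ref{thm:criterion}, so the entire content of the claim is to produce $x\in G$ with $\phi(x)\neq h(x)$ for all $h\in\Gamma_H$, i.e.\ hypothesis~\eqref{item:pointwise}. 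Once this is done, Proposition~\ref{prop:super_grossman} outputs a finite quotient $q\colon\Out(G)\to F$ separating $q(\bar\phi)=q(\Psi(\phi))$ from $q(H)$, which is the assertion.

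First I would reformulate hypothesis~\eqref{item:pointwise} in terms of conjugacy. Since $\Inn(G)\leq\Gamma_H$, for each $\bar h\in H$ with chosen representative $\tilde h\in\Aut(G)$ the set $\{h(x):\Psi(h)=\bar h\}$ is the conjugacy class of $\tilde h(x)$; hence $\{h(x):h\in\Gamma_H\}$ is the union over $\bar h\in H$ of the conjugacy classes of $\tilde h(x)$. Using that $\phi(x)$ is conjugate to $\tilde h(x)$ if and only if $(\tilde h^{-1}\phi)(x)$ is conjugate to $x$, and that $\tilde h^{-1}\phi$ represents $\bar h^{-1}\bar\phi$, hypothesis~\eqref{item:pointwise} for a given $x$ becomes: \emph{for every $\bar\psi$ in the coset $H\bar\phi\subseteq\Out(G)$ and every representative $\psi\in\Aut(G)$ of $\bar\psi$, the element $\psi(x)$ is not conjugate to $x$} (note that whether $\psi(x)$ is conjugate to $x$ depends only on $\bar\psi$ and on $x$).

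Next I would split $H\bar\phi$ by Nielsen--Thurston type. As $\bar\phi\in M_0$ with $M_0$ torsion-free, $\bar\phi$ has infinite order, and $\bar\phi\in M_0-(H\cap M_0)$ gives $\bar\phi\notin H$; hence $\bar\phi^{-1}\in MCG(\Sigma_{\mathfrak g})-H$ also has infinite order. Applying Proposition~\ref{prop:non-pA} to $g=\bar\phi^{-1}$ shows that $\bar\phi^{-1}H$ has only finitely many non-pseudo-Anosov elements; since the pseudo-Anosov property is inversion-invariant and $H\bar\phi=(\bar\phi^{-1}H)^{-1}$, the same holds for $H\bar\phi$. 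The two families are then handled separately:
\begin{itemize}
\item For the pseudo-Anosov classes $\bar\psi\in H\bar\phi$ I would invoke the standard fact that a pseudo-Anosov fixes no nontrivial free homotopy class of closed curves; equivalently, for any representative $\psi$ and any $x\neq 1$, the element $\psi(x)$ is not conjugate to $x$. Thus \emph{every} nontrivial $x$ simultaneously satisfies the required condition for all pseudo-Anosov classes.
\item The finitely many non-pseudo-Anosov classes $\bar\psi_1,\dots,\bar\psi_m\in H\bar\phi$ are all nontrivial in $\Out(G)$ (otherwise $\bar\phi\in H$), so any representatives $\psi_1,\dots,\psi_m\in\Aut(G)$ are non-inner. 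Proposition~\ref{prop:not-conjugate} then yields a single $x$ with $\psi_j(x)$ not conjugate to $x$ for all $j$, and such an $x$ is automatically nontrivial since $\psi_j(1)=1$.
\end{itemize}

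The $x$ supplied by Proposition~\ref{prop:not-conjugate} (or any nontrivial $x$ if there are no non-pseudo-Anosov classes) therefore verifies the reformulated condition for the whole coset $H\bar\phi$, hence hypothesis~\eqref{item:pointwise}. Combined with conjugacy separability of $\Gamma_H$, Proposition~\ref{prop:super_grossman} gives the claim. The step requiring the most care is the dichotomy above: the \emph{infinitely many} pseudo-Anosov constraints must collapse to the single constraint $x\neq 1$, which is precisely where the geometric input (a pseudo-Anosov has no invariant conjugacy class) reduces matters to the finite, algebraic statement of Proposition~\ref{prop:not-conjugate}; the only bookkeeping subtlety is passing between the left coset appearing in Proposition~\ref{prop:non-pA} and the right coset $H\bar\phi$ via inversion.
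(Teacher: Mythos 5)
Your proposal is correct and takes essentially the same route as the paper's proof: both verify hypothesis~\eqref{item:conj_sep} of Proposition~\ref{prop:super_grossman} directly from the standing assumption of Theorem~\ref{thm:criterion}, and verify hypothesis~\eqref{item:pointwise} by splitting the relevant coset into pseudo-Anosov classes (killed by any nontrivial $x$, since a pseudo-Anosov fixes no nontrivial conjugacy class) and the finitely many non-pseudo-Anosov classes supplied by Proposition~\ref{prop:non-pA}, which are then handled by Proposition~\ref{prop:not-conjugate}. The only differences are cosmetic: the paper phrases things via the left coset $\bar\phi^{-1}H$ and the condition $\phi^{-1}h(x)\neq x$ rather than your right coset $H\bar\phi$ (equivalent, since ``$\psi(x)$ conjugate to $x$'' is inversion-invariant in $\psi$), and you make explicit two checks the paper leaves implicit, namely that the non-pseudo-Anosov classes are non-inner (so Proposition~\ref{prop:not-conjugate} applies) and that the resulting witness $x$ is automatically nontrivial.
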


\begin{proof}[Proof of Claim~\ref{claim:inf_order_separate}]
We will use Proposition~\ref{prop:super_grossman}.  First, by hypothesis, $\Gamma_H=\Psi^{-1}(H)$ is conjugacy separable, so condition \eqref{item:conj_sep} of Proposition~\ref{prop:super_grossman} is satisfied.  So, we are left to verify condition~\eqref{item:pointwise} of Proposition~\ref{prop:super_grossman} for the given $\phi$.

Since $\bar\phi\in M_0-(H\cap M_0)$, the element $\bar \phi$ has infinite order in $MCG(\Sigma_{\mathfrak g})$, so Proposition~\ref{prop:non-pA} applies.  The only properties of $\phi$ used in the following argument are that $\bar\phi\in MCG(\Sigma)$, and $\bar \phi$ has infinite order, and $\bar \phi\not\in H$.  

We have to find $x\in \pi_1(\Sigma_{\mathfrak g})$ such that $\phi(x)\neq h(x)$ for $h\in \Gamma_H$. This is equivalent to $\phi^{-1}h(x)\neq x$.

If $\Psi(\phi^{-1}h)$ (which is necessarily in $MCG(\Sigma_{\mathfrak g})$ since $\Psi(h)\in H$ and $\bar \phi\in MCG(\Sigma_{\mathfrak g})$) is pseudo-Anosov, then any nontrivial $x$ satisfies the required condition for the $h$ in question. Hence, we are only concerned with the non-pseudo-Anosov elements of $\bar\phi^{-1}H$, of which there are only finitely many by Proposition \ref{prop:non-pA}. Taking arbitrary preimages of these elements, we obtain finitely many $h_i\in \phi^{-1}\Gamma_H$ such that if $x\in G$ has the property that $h_i(x)$ is not conjugate to $x$ for all $i$, then $x$ satisfies the required property, that is, $\phi(x)\neq h(x)$ for all $h\in \Gamma_H$.


The existence of such $x$ is guaranteed by Proposition \ref{prop:not-conjugate}.  Hence Proposition~\ref{prop:super_grossman} supplies a finite quotient $q:\Out(\pi_1(\Sigma_{\mathfrak g}))\to F$ such that $q(\bar \phi)\not\in q(H)$.
\end{proof}

From Claim~\ref{claim:inf_order_separate}, we get that $H\cap M_0$ is separable in $M_0$, since for each $\phi\in M_0-H\cap M_0$, we can restrict the quotient $q$ from the claim to $M_0\leq \Out(\pi_1(\Sigma_{\mathfrak g}))$ to get a finite quotient $q:M_0\to \bar M_0\leq F$ where $q(\phi)\not\in q(H\cap M_0)$. In summary, we have a finite-index subgroup $M_0\leq MCG(\Sigma_{\mathfrak g})$ and a subgroup $H\leq MCG(\Sigma_{\mathfrak g})$ such that $H\cap M_0$ is separable in $M_0$.  Applying Proposition 2.2.(ii) of \cite{Niblo} (taking $K=K_0=\{1\}$ in the statement of that proposition) shows that $H$ is separable in $MCG(\Sigma_{\mathfrak g})$, as required.
%
\end{proof}

\section{MMS subgroups}

We finally construct examples to which Theorem~\ref{thm:criterion} applies.  We will use the following form of the Bounded Geodesic Image Theorem. We denote the subsurface projection to the subsurface $Y$ by $\pi_Y$ and abbreviate $d_{\mathcal C(Y)}(\pi_Y(\cdot),\pi_Y(\cdot))=d_{\mathcal C(Y)}(\cdot,\cdot)$.

\begin{thm}\cite[Theorem 3.1]{MM2}\label{thm:BGI}
For each $\mathfrak g\geq 2$ there exists $B\geq 0$ such that the following holds. If the vertices $x,y\in \mathcal C(\Sigma)$ satisfy $d_{\mathcal C(\Sigma_{\mathfrak g}-v)}(x,y)\geq B$ for some non-separating curve $v$, then any geodesic from $x$ to $y$ contains $v$.
\end{thm}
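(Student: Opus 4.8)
Since $v$ is non-separating and $\mathfrak g\geq 2$, the complement $Y=\Sigma_{\mathfrak g}-v$ is a connected subsurface whose curve graph $\mathcal C(Y)$ is (uniformly) hyperbolic, and the only essential simple closed curve disjoint from $Y$ is $v$ itself. The plan is to prove the contrapositive: given a geodesic $\gamma=(x=w_0,w_1,\dots,w_n=y)$ in $\mathcal C(\Sigma_{\mathfrak g})$ that does \emph{not} contain $v$ as a vertex, every $w_i$ then has a well-defined subsurface projection $\pi_Y(w_i)\in\mathcal C(Y)$, and I would bound $d_{\mathcal C(Y)}(x,y)$ by a constant $B$ independent of $\gamma$. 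Choosing the constant in the statement to exceed this bound forces $v$ onto every geodesic, as claimed. Thus the assertion is exactly the Bounded Geodesic Image phenomenon for the single subsurface $Y$.

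The elementary ingredient is the standard Lipschitz estimate: if $a,b$ are disjoint curves each meeting $Y$ essentially, then $d_{\mathcal C(Y)}(\pi_Y(a),\pi_Y(b))$ is at most a universal constant, since their surgery arcs can be realised disjointly in $Y$. Applied to the consecutive pairs $w_i,w_{i+1}$ this only yields a bound linear in $n$, so the real point is to remove the dependence on the length of $\gamma$. For this I would follow the Teichm\"uller-geodesic strategy of \cite{MM2}: realise $x$ and $y$ as short curves at the two ends of a Teichm\"uller geodesic $\mathcal G$, use that a $\mathcal C(\Sigma_{\mathfrak g})$-geodesic shadows $\mathcal G$ in the sense that its vertices are curves that become short somewhere along $\mathcal G$, and control $\pi_Y$ through the thick--thin decomposition. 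A large value of $d_{\mathcal C(Y)}(x,y)$ forces the subsurface $Y$ to sit deep in the thin part over a long interval of $\mathcal G$, equivalently the annulus about $v$ to become very thin there, which makes $v$ one of the short curves and hence (short curves lying within bounded distance of geodesic vertices) places $v$ on $\gamma$ once $B$ is large enough.

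The main obstacle is precisely this last step: converting a large subsurface projection into the genuine appearance of $v$ on $\gamma$, which rests on the quantitative interaction between curve-graph distance, Teichm\"uller geodesics and Rafi-type thin-part estimates rather than on any soft argument. If those analytic estimates proved delicate, I would instead pursue the purely combinatorial route via unicorn/surgery paths, in the spirit of \cite{HPW} and the subsequent combinatorial bounds for the Bounded Geodesic Image Theorem due to Webb, showing directly that along a curve-graph geodesic the projection $\pi_Y$ is coarsely constant away from bounded neighbourhoods of the endpoints. Here one can also exploit the concrete observation that at most boundedly many $w_i$ are disjoint from $v$: such vertices lie in the radius-one ball about $v$, which the geodesic $\gamma$ meets in bounded diameter, so almost all of the projections $\pi_Y(w_i)$ are computed by genuine surgery. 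Either approach produces a uniform $B$, and taking the contrapositive gives the theorem.
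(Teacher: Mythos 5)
The paper gives no proof of this statement: it is imported verbatim from Masur--Minsky \cite[Theorem 3.1]{MM2}, so the only content to check is the translation between the quoted form and the standard Bounded Geodesic Image Theorem. Your proposal gets exactly that translation right (the contrapositive, plus the observation that a non-separating $v$ is the only curve with empty projection to $\Sigma_{\mathfrak g}-v$), and your sketch of the Teichm\"uller-geodesic and unicorn-path arguments correctly describes the proofs in \cite{MM2} and in Webb's combinatorial work that the citation rests on, so your approach coincides with the paper's reliance on that external result.
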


We now prove Theorem~\ref{thm:intro_examples}:

\begin{thm}\label{thm:examples}
For every $\mathfrak g\geq 3$ and $n\geq 1$ there exist infinitely many conjugacy classes of malnormal convex cocompact subgroups $H$ of $MCG(\Sigma_{\mathfrak g})$ isomorphic to a free group of rank $n$ whose corresponding extension $\Gamma_H$ is virtually compact special.
\end{thm}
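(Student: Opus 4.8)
The plan is to invoke the Manning--Mj--Sageev construction \cite{MMS} to produce the convex-cocompact free subgroups with the desired virtually-special extension, and then do the extra work of verifying malnormality, which is the only genuinely new content. Recall that \cite{MMS} produces, for each $\mathfrak g\geq 3$ and each $n\geq 1$, convex-cocompact free subgroups $H\cong F_n$ of $MCG(\Sigma_{\mathfrak g})$ whose extension $\Gamma_H$ is virtually compact special; these are built by taking suitable high powers of independent pseudo-Anosov maps (a free ping-pong group), so the generators and their axes in $\mathcal C(\Sigma_{\mathfrak g})$ can be arranged to be as "spread out" as we like. To get infinitely many conjugacy classes one varies the underlying pseudo-Anosovs (or their powers); since convex-cocompactness and virtual specialness of $\Gamma_H$ are preserved, the only point requiring care is malnormality, and we will arrange this for infinitely many choices.

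\smallskip

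For malnormality, I would use Lemma~\ref{lem:no_hidden_sym} together with the Bounded Geodesic Image Theorem~\ref{thm:BGI} to control ``hidden symmetries'' of the pieces used to build $H$. Concretely, the MMS subgroups arise from a graph-of-groups / surface-bundle construction in which the relevant pseudo-Anosovs are built by composing a fixed pseudo-Anosov $\phi$ with mapping classes supported on subsurfaces determined by curves $c$ and their $\phi^n$-translates; the condition $\stab(c)\cap\stab(\phi^n(c))=\{1\}$ from Lemma~\ref{lem:no_hidden_sym} (valid since $(\mathfrak g,\mathfrak p)=(\mathfrak g,0)\notin\{(1,1),(1,2),(2,0)\}$ for $\mathfrak g\geq 3$) is exactly what rules out unwanted stabiliser overlaps between distinct generators. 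Using Theorem~\ref{thm:BGI}, I would show that for a nontrivial $\gamma\in MCG(\Sigma_{\mathfrak g})$, an infinite intersection $H\cap\gamma H\gamma^{-1}$ would force an orbit $\gamma H\cdot o$ to fellow-travel $H\cdot o$ along an unbounded set; by Lemma~\ref{lem:??}, this would make $H\cap\gamma H\gamma^{-1}$ infinite, and infiniteness of this intersection (an infinite-index quasiconvex subgroup of a free group acting coboundedly on a quasiconvex subset of the curve graph) would exhibit a common pseudo-Anosov whose axis is shared, ultimately contradicting the no-hidden-symmetries condition once $n$ is large enough.

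\smallskip

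The mechanism I expect to be cleanest is: if $H$ fails to be malnormal, there is $\gamma\notin H$ with $H\cap\gamma H\gamma^{-1}$ nontrivial, hence (since $H$ is free and all nontrivial elements are pseudo-Anosov) containing a pseudo-Anosov $\psi$. Then $\psi$ and $\gamma^{-1}\psi\gamma$ are both in $H$ and have the same (un-translated versus $\gamma$-translated) quasi-axis in $\mathcal C(\Sigma_{\mathfrak g})$, so $\gamma$ must coarsely stabilise the axis $Y_\psi$; by Proposition~\ref{prop:E}.\eqref{item:coarse_stab} this puts $\gamma\in E(\psi)$. For high-power ping-pong generators this maximal elementary subgroup meets $H$ in $\langle\psi\rangle$ and is otherwise controlled by the no-hidden-symmetries data, which forces $\gamma\in H$, a contradiction. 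The main obstacle, and the step demanding the most care, is verifying that the MMS generators can be chosen so that their elementary closures $E(\cdot)$ interact as required --- i.e.\ translating Lemma~\ref{lem:no_hidden_sym} and the BGI estimates into a genuine statement that no external $\gamma$ conjugates one generator's axis onto another's --- and then checking that infinitely many inequivalent such choices (giving distinct conjugacy classes) survive all these constraints simultaneously while keeping $\Gamma_H$ virtually compact special.
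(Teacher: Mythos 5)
Your high-level plan---quote the MMS construction for convex-cocompactness, freeness, and virtual specialness of $\Gamma_H$, and then do new work only for malnormality using Lemma~\ref{lem:no_hidden_sym} and Theorem~\ref{thm:BGI}---is exactly the paper's strategy. But the mechanism you propose for malnormality has a genuine gap at its central step. If $\psi\in H\cap\gamma H\gamma^{-1}$ is nontrivial, then what you learn is that $\psi'=\gamma^{-1}\psi\gamma$ lies in $H$ and that $\gamma$ maps the quasi-axis $Y_{\psi'}$ to $Y_\psi$. In general $\psi'\neq\psi$, so $\gamma$ does \emph{not} coarsely stabilise $Y_\psi$, and Proposition~\ref{prop:E}.\eqref{item:coarse_stab} gives you no conclusion about $\gamma$ whatsoever. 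Repairing this is precisely the problem of ruling out an external element carrying one axis of $H$ onto another, which you yourself flag as ``the main obstacle'' but never resolve; asserting that the elementary closures are ``controlled by the no-hidden-symmetries data'' is not an argument, because Lemma~\ref{lem:no_hidden_sym} is a statement about stabilisers of \emph{curves} under a partial/pseudo-Anosov, not about elementary closures of pseudo-Anosov elements of $H$, and the bridge between the two is the missing content. (A smaller but related confusion: your invocation of Lemma~\ref{lem:??} is circular---you assume $H\cap\gamma H\gamma^{-1}$ is infinite, deduce fellow-travelling, and then use the lemma to conclude the intersection is infinite, which was the hypothesis.)

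The paper closes this gap using structure that your description of the MMS subgroups omits: they are not ping-pong groups generated by high powers of independent full-surface pseudo-Anosovs, but stabilisers of \emph{tight trees} of homologous non-separating curves, built from partial pseudo-Anosovs supported on $\Sigma_{\mathfrak g}-\{v\}$. This is where Lemma~\ref{lem:no_hidden_sym} actually enters: it is applied to those partial pseudo-Anosovs (on the punctured surface) to arrange that $\stab_{MCG(\Sigma_{\mathfrak g}-v)}(u)\cap\stab_{MCG(\Sigma_{\mathfrak g}-v)}(w)$ is trivial for consecutive vertices $u,v,w$ of the tree. Malnormality is then proved without ever analysing elementary closures: if $gHg^{-1}\cap H$ is infinite, the trees $T$ and $gT$ contain $200$-fellow-travelling bi-infinite geodesics; Theorem~\ref{thm:BGI} plus large, pairwise-distinct projection numbers forces the translated geodesic to pass through the same vertices with the same local projection data, producing $h\in H$ such that $gh$ fixes two disjoint triples of consecutive curves; the trivial-stabiliser condition then forces $gh$ to be simultaneously a power of Dehn twists about two distinct curves, hence $gh=1$ and $g\in H$. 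Distinct conjugacy classes are likewise obtained by making projection numbers differ across the trees for different subgroups---again a use of the freedom in choosing powers that your sketch gestures at but does not implement. So the proposal identifies the right ingredients but does not constitute a proof: the axis-stabilisation claim is false as stated, and the substitute argument via the tight-tree combinatorics is absent.
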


\begin{proof}
In \cite{MMS} the authors construct subgroups (that we call \emph{MMS subgroups}) satisfying all requirements except possibly malnormality.  MMS subgroups are described in \cite[Proposition 5.7, Theorem 5.8]{MMS}.  Our goal is to construct (many) malnormal MMS subgroups. 

Each MMS subgroup $H$ stabilises a \emph{tight tree of homologous non-separating curves} (a \emph{tight tree} for short), which is a tree $T$ with the following description (see \cite[Section 3.1]{MMS}):  

\begin{itemize}
    \item For each vertex $v$ of $T$ we have a vertex $c_v\in \mathcal C(\Sigma_{\mathfrak g})$, and for each edge $e$ of $T$, we have a tight geodesic $\gamma_e$ in $\mathcal C(\Sigma_{\mathfrak g})$ connecting the vertices corresponding to the endpoints of $e$.
    
    \item All multicurves occurring along $\gamma_e$ (including the endpoints) are formed from non-separating curves.  Moreover, all curves appearing along all tight geodesics are homologous to each other.
\end{itemize}

There are some additional properties of $T$ guaranteeing that the natural subdivision of $T$ isometrically embeds into $\mathcal C(\Sigma_{\mathfrak g})$ (\cite[Proposition 3.2]{MMS}).  Moreover, the tight tree $T$ stabilised by an MMS subgroup $H$ has the property that whenever $u,v,w$ are consecutive vertices along a tight geodesic as above, then there is a pseudo-Anosov on $\Sigma_{\mathfrak g}-\{v\}$ which maps $u$ to $w$. In fact, the tree is constructed starting from a choice of such partial pseudo-Anosovs and any choice of sufficiently large powers of the partial pseudo-Anosovs (\cite[Section 5.1]{MMS}).

As a result, there are MMS subgroups $H$ that stabilise a tight tree $T$ with the following additional properties. For an edge $e$ of the tree, we call the \emph{projection numbers} the quantities $d_{\mathcal C(\Sigma_{\mathfrak g}-v)}(u,w)$ for consecutive $u,v,w$ along $\gamma_e$. Then
\begin{itemize}
    \item for any edge $e$ of $T$, all projection numbers are different and larger than $10B$, for $B$ as in Theorem~\ref{thm:BGI}, 
    \item for any two edges $e,f$ of $T$ belonging to different $H$-orbits, all projection numbers for $e$ are different from all projection numbers for $f$,
    \item the tight geodesics all have length at least $10^3$,
    \item $Stab_{MCG(\Sigma_{\mathfrak g}-v)}(u)\cap Stab_{MCG(\Sigma_{\mathfrak g}-v)}(w)$ is trivial for all $u,v,w$ that are consecutive in $\gamma_e$, where $e$ is any edge of $T$.
\end{itemize}

The last item can be arranged using the pseudo-Anosovs constructed in Lemma \ref{lem:no_hidden_sym}. 


We now verify malnormality of $H$.  Suppose for some $g\in MCG(\Sigma_{\mathfrak g})$ that $gHg^{-1}\cap H$ is infinite.  Then $T$ and $gT$ respectively contain bi-infinite geodesics $\alpha$ and $g\alpha'$ that 200-fellow-travel.

Fix an edge $e$ of $T$ such that $\gamma_e\subset\alpha$. We claim that for all vertices $c\in \gamma_e$ that are at distance more than $200$ from both endpoints of $\gamma_e$, the geodesic $g\alpha'$ passes through $c$. Notice that then the third item provides at least 100 such vertices $c$.

To prove the claim we use the lower bound on projection numbers from the first of the above four items and Theorem~\ref{thm:BGI}, as follows. First of all, by Theorem~\ref{thm:BGI} the endpoints $a',b'$ of $\gamma_e$ each project to $\mathcal C(\Sigma_{\mathfrak g}-c)$ within $B$ of the projection of one of the vertices $a,b$ adjacent to $c$ along $\gamma_e$. Again by Theorem \ref{thm:BGI} points $a'',b''$ in $g\alpha'$ that lie within $200$ of $a',b'$ project within $2B$ of the projections of $a,b$. In particular, $a'',b''$ project $B$-far on $C(\Sigma_{\mathfrak g}-c)$, and once again by Theorem \ref{thm:BGI} we have that $g\alpha'$ passes through $c$.

With another application of the third item, there exists an edge $f$ of $T$ such that there are distinct triples $u,v,w$ and $u_1,v_1,w_1$ of $\gamma_e$--consecutive vertices, both of which are contained in $g\gamma_f$.

By the second item, $e$ and $f$ are in the same $H$--orbit.  Hence there exists $h\in H$ such that $u,v,w,u_1,v_1,w_1\in \gamma_e\cap (gh)\gamma_e$.  

By the "projection numbers are different" clause of the first item, $ghu=u, ghv=v,ghw=w$.  From the fourth item, it follows that $gh$ is a power of the Dehn twist around $v$.  

The same argument applied to $u_1,v_1,w_1$ shows that $gh$ is also a power of the Dehn twist around $v_1$, so $gh=1$ and thus $g\in H$, i.e. $H$ is malnormal.


To arrange infinitely many conjugacy classes the argument is similar, arranging the projection numbers to be different for tight trees associated to different subgroups.
\end{proof}




\bibliographystyle{alpha}
\bibliography{biblio.bib}

\end{document}